\newtheorem{theorem}{Theorem}[section]
\newtheorem{lemma}[theorem]{Lemma}
\newtheorem{corollary}[theorem]{Corollary}
\newtheorem{question}[theorem]{Question}
\newtheorem*{remark}{Remark}
\title{On the number of components of twisted torus links}
\author{Adnan}
\address{Department of Mathematics, Kangwon National University, Republic of Korea}
\email{adnanshahab35@kangwon.ac.kr}
\author{Thiago de Paiva}
\address{IMPA, Brazil}
\email{thhiagodepaiva@gmail.com}
\author{Kyungbae Park}
\address{Department of Mathematics, Kangwon National University, Republic of Korea}
\email{kyungbaepark@kangwon.ac.kr}
\subjclass[2020]{57K10, 20B05}
\begin{document}
\begin{abstract}
Twisted torus links $T(p,q;r,s)$ generalize torus links by introducing  $s$ additional twists on $r$ adjacent strands of the torus link $T(p,q)$. It is well known that the number of components of a torus link $T(p, q)$ is given by the greatest common divisor of $p$ and $q$. However, determining the number of components of twisted torus links is not as straightforward based solely on their parameters. In this work, we present a Euclidean algorithm-like procedure for computing the number of components of twisted torus links based on their parameters. As a result, we show that the number of components of a twisted torus link $T(p, q; r, s)$ is a multiple of $\gcd(p, q, r, s)$, and in particular, $T(p, q; r, s)$ is a knot only if $\gcd(p, q, r, s) = 1$. We also use our algorithm to prove several conjectures related to the number of components in twisted torus links.
\end{abstract}
\maketitle

\section{Introduction}
Given positive integers $p\geq r>0$, and integers $q$ and $s$, a \emph{(generalized) twisted torus link}\footnote{In \cite{Paiva-2023-2} the term \emph{twisted torus link} refers only to $T(p,q;r,s)$ when $s$ is a multiple of $r$. In this paper we consider more general cases.} $T(p,q;r,s)$ is a generalization of torus links obtained by introducing $s$ additional twists on the first $r$ adjacent strands of the standard braid representation of the $(p, q)$-torus link. It can also be precisely described as the closure of a braid with $p$ strands of the following braid word:
\[
    (\sigma_1\sigma_2\cdots\sigma_{p-1})^q(\sigma_1\cdots\sigma_{r-1})^{s}
\]
See Figure \ref{Fig:TTL} for a diagram of the twisted torus link $T(9,6;7,4)$. 

In particular, if $p$ and $q$ are relatively prime and $s$ is a multiple of $r$, then the twisted torus link $T(p,q;r,s)$ is in fact a knot, known as a \emph{twisted torus knot}. Twisted torus knots form a well-studied family of knots with applications in low-dimensional topology. They were introduced by Dean to study Seifert fibered spaces obtained via Dehn fillings \cite{Thesis}. Additionally, they have been used to provide important examples in the study of simple hyperbolic knots (in terms of the number of ideal tetrahedra in the exterior) \cite{Callahan-Dean-Weeks-1999, Champanerkar-Kofman-Patterson-2004} and Heegaard splittings \cite{Moriah-Sedgwick-2009}. Their properties and invariants have been extensively studied. For example, their geometric type has been explored in \cite{lee2018satellite, LeeTorusknotsobtained, unexpected, LeeThiago}, their bridge spectra in \cite{Bridge}, their Alexander polynomial in \cite{Morton-2006,Adnan-Park-2024}, their knot Floer homology in \cite{Vafaee-2015}, and their Jones polynomial in \cite{Bavier-Doleshal-2023}. 

Twisted torus links can also be compared to $T$-links, which were introduced by Birman and Kofman in \cite{Birman_09} to describe Lorenz links. More precisely, twisted torus links $T(p, q; r, s)$ form a special subclass of $T$-links when $q, s>0$; see Section \ref{sec:$T$-links}.

One of the fundamental questions in link theory is determining when a link is a knot, or more generally, finding the number of its components. For torus links $T(p,q)$, the number of components is given by $\gcd(p,q)$. However, determining the number of components of a twisted torus link $T(p,q;r,s)$ is not straightforward and cannot be easily deduced from its parameters. Note that the condition for twisted torus knots, namely that $\gcd(p,q) = 1$ and $s$ is a multiple of $r$, is a sufficient but not necessary condition for $T(p,q;r,s)$ to be a knot. For example, one can verify that $T(5,4;3,2)$, and more generally $T(2n+3, 2n+2; 2n+1, 2n)$ for any $n\geq1$, is a knot.

Let $NC(p,q;r,s)$ denote the number of components of the twisted torus link $T(p,q;r,s)$. In \cite[Theorem 2.1]{LC-2016}, a condition is given for the parameters $p, q, r,$ and $s$ that determine the parity of $NC(p,q;r,s)$ (i.e., whether $NC(p,q;r,s)$ is even or odd). The main result of this paper is to present a Euclidean algorithm-like arithmetic procedure for computing the number of components of twisted torus links based on their parameters.

\begin{theorem}\label{thm:main}
For positive integers $p\geq r> 0$ and $q,s\in\mathbb{Z}$, let $NC(p,q;r,s)$ denote the number of components of the twisted torus link $T(p,q;r,s)$, and $[x]_m$ denote the residue of $x$ modulo $m$. Define 
\[(p_1, q_1, r_1, s_1) = (p, [q]_p, r, [s]_r),\] 
and let $\{(p_i, q_i, r_i, s_i)\}_{i=1}^n$ be the sequence of quadruples obtained by the following recursive procedure:\\
\indent If $q_{i} = 0$ or $s_{i} = 0$, let $n = i$ . Otherwise,
    \[(p_{i+1}, q_{i+1}, r_{i+1}, s_{i+1}):=\begin{cases}
        (q_{i}, [p_{i}]_{q_{i}}, r_{i}, [-s_{i}]_{r_{i}}), & \text{if } q_{i}\geq r_{i}\\
        (r_{i}, [s_{i}+q_{i}]_{r_{i}}, q_{i}, [r_{i}-p_{i}]_{q_{i}}) & \text{if } q_{i}<r_{i}.
    \end{cases}\]
Then $\{(p_i, q_i, r_i, s_i)\}_{i=1}^{n}$ is a finite sequence, $NC(p,q;r,s)=NC(p_i, q_i, r_i, s_i)$ for each $i=1,\dots,n$, and
\[NC(p,q;r,s)=\begin{cases}
    p_n - r_n + \gcd(r_n, s_n)  & \text{if } q_n = 0\\
    \gcd(p_n, q_n)              & \text{if } s_n = 0\\
\end{cases}\]
\end{theorem}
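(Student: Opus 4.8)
The plan is to pass from links to permutations. The number of components of the closure of a braid on $p$ strands equals the number of cycles of the permutation it induces on $\{0,1,\dots,p-1\}$, so I would attach to $T(p,q;r,s)$ the permutation $\Pi_{p,q,r,s}$ obtained by composing the rotation $i\mapsto i-q \pmod p$ coming from $(\sigma_1\cdots\sigma_{p-1})^q$ with the rotation $i\mapsto i-s \pmod r$ on the block $\{0,\dots,r-1\}$ coming from $(\sigma_1\cdots\sigma_{r-1})^s$ (extended by the identity on $\{r,\dots,p-1\}$); then $NC(p,q;r,s)$ is the number of cycles of $\Pi_{p,q,r,s}$. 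The initial reduction $(p,q,r,s)\mapsto(p,[q]_p,r,[s]_r)$ is then immediate: raising $q$ by $p$ (resp.\ $s$ by $r$) inserts a full twist $(\sigma_1\cdots\sigma_{p-1})^p$ (resp.\ $(\sigma_1\cdots\sigma_{r-1})^r$), which is the identity permutation, so neither $\Pi_{p,q,r,s}$ nor its cycle count changes.

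The two base cases fall out directly. When $q_n=0$ the permutation is the twist alone: rotation by $-s_n$ on $\{0,\dots,r_n-1\}$, contributing $\gcd(r_n,s_n)$ cycles, together with $p_n-r_n$ fixed points, for a total of $p_n-r_n+\gcd(r_n,s_n)$. When $s_n=0$ the permutation is the single rotation $i\mapsto i-q_n \pmod{p_n}$, a power of a $p_n$-cycle, with $\gcd(p_n,q_n)$ cycles. Both agree with the stated formulas.

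The heart of the argument is the recursion, which I would realize as a \emph{first-return map} to a sub-interval $A$: take $A=\{0,\dots,q_i-1\}$ when $q_i\ge r_i$ and $A=\{0,\dots,r_i-1\}$ when $q_i<r_i$. The key principle is that the first-return map of a permutation to a subset meeting every cycle has exactly the same number of cycles. To verify the hypothesis, note that away from the twist block $\Pi_{p,q,r,s}$ acts as pure rotation by $-q_i$ modulo $p_i$, whose orbits are the cosets of the subgroup generated by $q_i$, i.e.\ the residue classes modulo $\gcd(p_i,q_i)$; since $A$ consists of $|A|\ge\gcd(p_i,q_i)$ consecutive integers it meets every such class, so a cycle disjoint from $A$ would be a full rotation orbit avoiding $A$, which is impossible. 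Having established that the return map preserves the cycle count, I would compute it explicitly: iterating the rotation off $A$ is precisely Euclidean division of $p_i$ by $q_i$ (resp.\ the analogous division when $q_i<r_i$), and the outcome is again a nested pair of rotations, that is, a twisted-torus permutation with reduced parameters.

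The main obstacle is the \emph{exact} identification of this return map with $\Pi_{p_{i+1},q_{i+1},r_{i+1},s_{i+1}}$, including the precise twist amounts $[-s_i]_{r_i}$, $[s_i+q_i]_{r_i}$, and $[r_i-p_i]_{q_i}$. The sign reversals are forced: the first-return rotation emerges as $+[p_i]_{q_i}$ rather than $-[p_i]_{q_i}$, so the return map is the orientation-reversed twisted-torus permutation, and rewriting it in standard form flips the twist sign, producing $[-s_i]_{r_i}$. The case $q_i<r_i$ is the delicate one, since the new twist block $\{0,\dots,q_i-1\}$ sits strictly inside the old one: elements of $\{q_i,\dots,r_i-1\}$ return directly, while those below $q_i$ make an excursion through $\{r_i,\dots,p_i-1\}$, and disentangling these two behaviors is where $[s_i+q_i]_{r_i}$ and $[r_i-p_i]_{q_i}$ must be read off. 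Finiteness of the sequence is then a Euclidean-style descent: $p_i$ is non-increasing and strictly decreases except when $r_i=p_i$, in which case the next step forces $s=0$ and the procedure halts.
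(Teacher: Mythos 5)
Your route is genuinely different from the paper's: the paper never uses the permutation model in its proof, instead establishing each reduction by link-level moves (a $180^\circ$ rotation of the flat torus for the case $q_i\ge r_i$, and the Birman--Kofman isotopy onto $r$ strands followed by a crossing change for the case $q_i<r_i$), and it explicitly remarks that its argument ``relies on the topology of links.'' Your first-return-map strategy is a legitimate combinatorial alternative, and several pieces of it are complete and correct: the reduction of $q$ modulo $p$ and $s$ modulo $r$, both base cases, the principle that the induced return map on a subset meeting every cycle has the same number of cycles, the verification that $A$ meets every cycle (via the cosets of $\gcd(p_i,q_i)$), and the termination argument.

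However, there is a genuine gap exactly where you flag ``the main obstacle'': the identification of the first-return map with $\Pi_{p_{i+1},q_{i+1},r_{i+1},s_{i+1}}$ is asserted rather than proved, and that identification \emph{is} the recursion --- it is the only place the specific values $[-s_i]_{r_i}$, $[s_i+q_i]_{r_i}$ and $[r_i-p_i]_{q_i}$ can come from. Moreover, the case analysis you sketch for $q_i<r_i$ is not correct as stated: you assert that points of $\{0,\dots,q_i-1\}$ ``make an excursion through $\{r_i,\dots,p_i-1\}$,'' but when $p_i<r_i+q_i$ some of them (all of them if $p_i=r_i$) land directly back in $\{0,\dots,r_i-1\}$ with no excursion, so the dichotomy that is supposed to produce the new twist block has to be set up differently. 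The correct bookkeeping is governed by $[p_i-r_i]_{q_i}$: one finds that the return map is rotation by $-(q_i+s_i)$ on $\mathbb{Z}/r_i\mathbb{Z}$ followed by a twist by $[p_i-r_i]_{q_i}$ on a block of $q_i$ consecutive residues, and only after conjugating the block back to $\{0,\dots,q_i-1\}$ and passing to the inverse (which preserves cycle counts and flips the twist sign, as in your $q_i\ge r_i$ case) does one recover the stated quadruple. These computations do succeed, so your plan is salvageable, but carrying them out is the substance of the proof and is missing from the proposal.
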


\begin{figure}[t!]
    \centering
    \includegraphics[width=0.3\textwidth,angle=90]{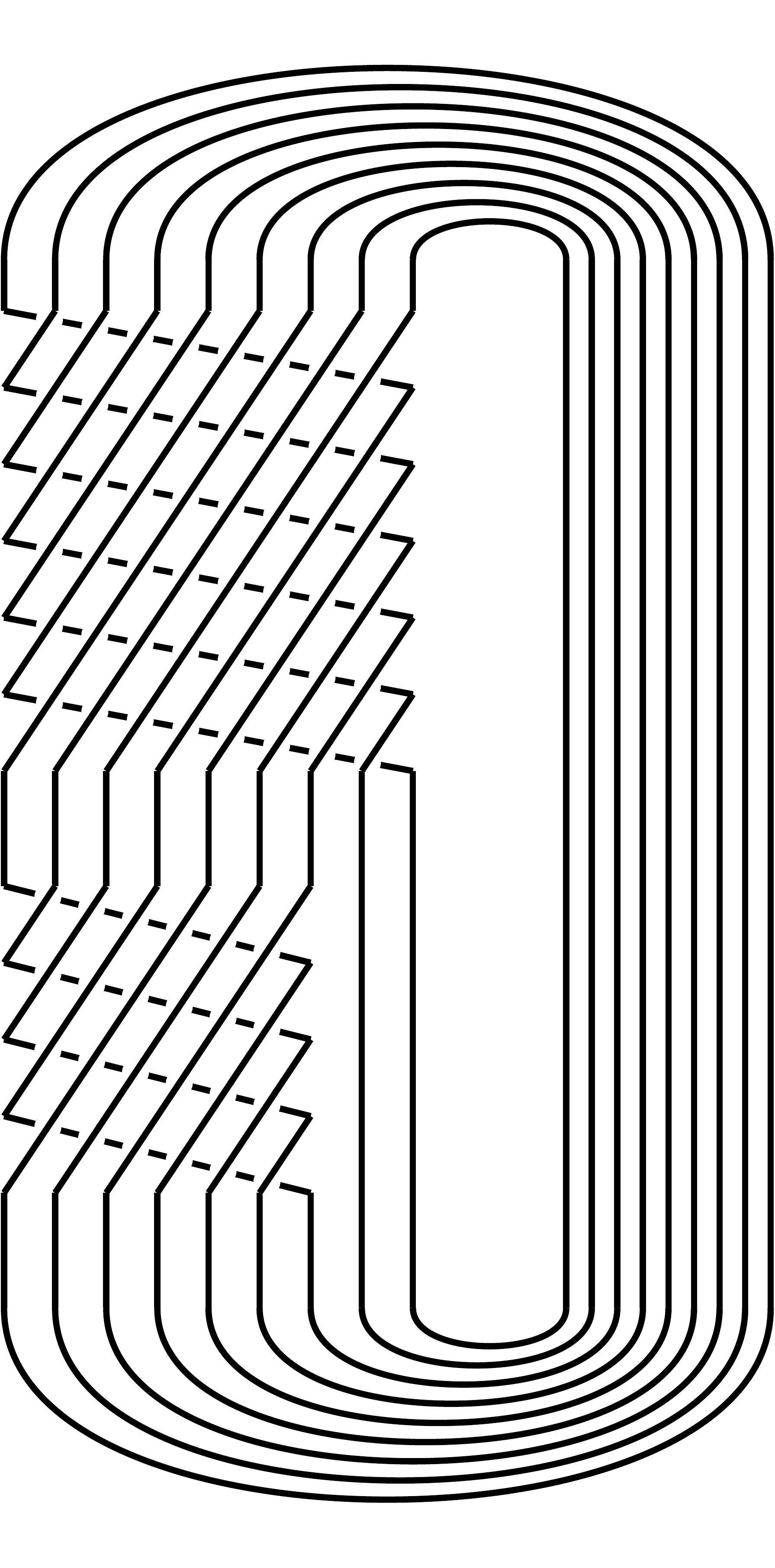}
    \caption{A diagram of the twisted torus link $T(9,6;7,4)$.}
    \label{Fig:TTL}
\end{figure}

\begin{remark}
    If $r=0$, then the twisted torus link $T(p,q;r,s)$ reduces to the torus link $T(p,q)$. In this case, the parameters $r$ and $s$ can be disregarded,  and the recursive structure of the algorithm simplifies to the Euclidean algorithm for computing $\gcd(p,q)$. Therefore, this special case recovers the well-known result that the number of components of the torus link $T(p,q)$ is $\gcd(p,q)$.
\end{remark}

Our proof of this result relies on observing operations on twisted torus links that preserve the number of components while reducing the number of strands. Note that the quadruples in the sequence $\{(p_i, q_i, r_i, s_i)\}_{i=1}^n$ from Theorem \ref{thm:main} correspond to a sequence of twisted torus links $T(p_i,q_i;r_i,s_i)$, all of which have the same number of components. The twisted torus link corresponding to $(p_n, q_n, r_n, s_n)$ is either a torus link or the union of a torus link and several unknotted components.

Although Theorem \ref{thm:main} does not provide an explicit formula for $NC(p,q;r,s)$ in terms of its parameters, it is sufficient to verify the conjectures in \cite{LC-2016} regarding $NC(p,q;r,s)$ for certain families, which were previously addressed through explicit computations using a computer. This will be discussed in Section \ref{sec:conjectures}. Another consequence of our proposed algorithm is that it provides information about the number of components of $T(p,q;r,s)$ in terms of the parameters.

\begin{theorem}\label{thm:gcd}
    The number of components of twisted torus link $T(p,q,r,s)$ is a positive multiple of $\gcd(p,q,r,s)$. 
\end{theorem}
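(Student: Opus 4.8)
The plan is to use Theorem \ref{thm:main} as the engine: since the recursive procedure preserves the number of components at every step, it suffices to show two things. First, that $\gcd(p_i, q_i, r_i, s_i)$ is an invariant (or at least a divisor-preserved quantity) along the sequence of quadruples, so that $\gcd(p,q,r,s)$ divides $\gcd(p_n, q_n, r_n, s_n)$. Second, that the terminal formula for $NC(p_n, q_n, r_n, s_n)$ is itself a multiple of $\gcd(p_n, q_n, r_n, s_n)$. Chaining these gives that $\gcd(p,q,r,s)$ divides $NC(p,q;r,s)$, and positivity of $NC$ is automatic since every link has at least one component.

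First I would verify the invariance of the gcd under the two branches of the recursion. Let $d = \gcd(p_i, q_i, r_i, s_i)$. In the branch $q_i \geq r_i$, the new quadruple is $(q_i, [p_i]_{q_i}, r_i, [-s_i]_{r_i})$. Since $[p_i]_{q_i} = p_i - kq_i$ for some integer $k$, any common divisor of $\{q_i, p_i, r_i, s_i\}$ divides $[p_i]_{q_i}$ and $[-s_i]_{r_i} = -s_i + \ell r_i$; conversely a common divisor of the new quadruple divides $p_i$ and $s_i$ by reversing these relations, so $d$ is genuinely preserved. The branch $q_i < r_i$ produces $(r_i, [s_i+q_i]_{r_i}, q_i, [r_i - p_i]_{q_i})$, and the same reduction-modulo argument shows each new entry is an integer combination of old entries and vice versa, so again $\gcd$ is unchanged. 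I would also check the initialization step $(p, [q]_p, r, [s]_r)$ the same way. Thus $\gcd(p_i,q_i,r_i,s_i) = \gcd(p,q,r,s)$ for all $i$, in particular for $i=n$.

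Next I would check the base cases. If $s_n = 0$, then $\gcd(p_n, q_n, r_n, 0) = \gcd(p_n, q_n, r_n)$, which divides $\gcd(p_n, q_n) = NC$, so $NC$ is a multiple of $\gcd(p_n,q_n,r_n,s_n)$. Actually more care is needed: $\gcd(p_n,q_n,r_n,s_n) = \gcd(p_n,q_n,r_n)$ divides $\gcd(p_n,q_n)$, which is exactly $NC$ in this case, so the multiple relation holds. If $q_n = 0$, then $\gcd(p_n, 0, r_n, s_n) = \gcd(p_n, r_n, s_n)$, and I need this to divide $NC = p_n - r_n + \gcd(r_n, s_n)$. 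Let $e = \gcd(p_n, r_n, s_n)$. Then $e \mid p_n$, $e \mid r_n$, and $e \mid \gcd(r_n, s_n)$, so $e$ divides the integer combination $p_n - r_n + \gcd(r_n, s_n)$, confirming the divisibility.

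The argument is essentially routine divisibility bookkeeping once Theorem \ref{thm:main} is granted, so I do not anticipate a serious obstacle. The one point requiring genuine attention is the $q_n = 0$ base case: one must confirm that $\gcd(p_n, r_n, s_n)$ divides $\gcd(r_n, s_n)$ (which is immediate since it divides both $r_n$ and $s_n$) and hence divides the full expression $p_n - r_n + \gcd(r_n, s_n)$; the subtlety is only to make sure each of the three summands is individually divisible by $e$. A secondary item is to state positivity cleanly: $NC \geq 1$ always, and $\gcd(p,q,r,s) \geq 1$, so calling $NC$ a \emph{positive} multiple is legitimate. I would present the two invariance computations compactly, perhaps noting that each transition expresses every new coordinate as an integer-linear combination of the old ones and symmetrically, which is the crux of gcd-preservation.
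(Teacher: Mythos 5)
Your proposal is correct and follows essentially the same route as the paper's (much terser) proof: track the gcd along the sequence from Theorem~\ref{thm:main} and check that the terminal formula is divisible by the final gcd. You in fact prove slightly more than needed (that the gcd is exactly preserved at each step, where the paper only uses that $\gcd(p_i,q_i,r_i,s_i)$ divides $\gcd(p_{i+1},q_{i+1},r_{i+1},s_{i+1})$), and your verification of both base cases is accurate.
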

\begin{proof}
    Observe that $\gcd(p_i,q_i,r_i,s_i)$ divides $\gcd(p_{i+1},q_{i+1},r_{i+1},s_{i+1})$ in the sequence in Theorem \ref{thm:main}. Moreover, $\gcd(p_n,q_n,r_n,s_n)$ divides $NC(p,q;r,s)$ in Theorem \ref{thm:main}.
\end{proof}

In particular, we have the following necessary condition for a twisted torus link to be a knot.
\begin{corollary}
    If $T(p,q;r,s)$ is a knot, then $\gcd(p,q,r,s)=1$.
\end{corollary}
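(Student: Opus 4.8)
The plan is to obtain this corollary as an immediate consequence of Theorem \ref{thm:gcd}, which has already established that $NC(p,q;r,s)$ is a positive multiple of $\gcd(p,q,r,s)$. The only additional ingredient needed is the defining property of a knot, namely that a knot is a link with exactly one component.

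Concretely, I would argue as follows. Suppose $T(p,q;r,s)$ is a knot. Then by definition its number of components satisfies $NC(p,q;r,s) = 1$. By Theorem \ref{thm:gcd}, we may write $NC(p,q;r,s) = k \cdot \gcd(p,q,r,s)$ for some positive integer $k$. Substituting $NC(p,q;r,s)=1$ gives $k \cdot \gcd(p,q,r,s) = 1$, and since both factors are positive integers, this forces $\gcd(p,q,r,s) = 1$ (equivalently, $\gcd(p,q,r,s)$ divides $1$).

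There is no real obstacle here, as the substantive content resides entirely in Theorem \ref{thm:gcd}, whose proof in turn rests on the invariance of the number of components along the recursive sequence of Theorem \ref{thm:main}. The corollary merely translates the arithmetic divisibility statement into the topological language of knots versus links. I would therefore keep the proof to a single sentence, citing Theorem \ref{thm:gcd} and noting that a one-component link has $NC = 1$, which is divisible by $\gcd(p,q,r,s)$ only when the latter equals $1$.

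One point worth a brief remark is that this gives only a \emph{necessary} condition: the converse fails, since as observed in the introduction there exist families such as $T(2n+3,2n+2;2n+1,2n)$ that are knots, and one can exhibit parameters with $\gcd(p,q,r,s)=1$ for which $T(p,q;r,s)$ has more than one component. I would leave this as a closing comment rather than develop it, so as to keep the corollary sharp and its proof minimal.
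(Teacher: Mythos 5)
Your proof is correct and is exactly the argument the paper intends: the corollary is stated as an immediate consequence of Theorem \ref{thm:gcd}, since a knot has $NC(p,q;r,s)=1$ and a positive multiple of $\gcd(p,q,r,s)$ can equal $1$ only if $\gcd(p,q,r,s)=1$. No gap; your closing remark about the converse failing matches the paper's own discussion following the corollary.
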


Note that the converse of the above corollary is not true. For example, twisted torus links $T(p,q;r,r)$ with $\gcd(p,q)=n$ and $\gcd(q,r)=1$ (hence $\gcd(p,q,r,s)=1$) are $n$ component links. Therefore, the condition that $\gcd(p,q,r,s)=1$ is not sufficient to ensure that $T(p,q;r,s)$ is a knot. As mentioned above, the conditions defining \emph{twisted torus knots} provide a sufficient but not necessary criterion. Therefore, we pose the following question.

\begin{question} 
    Find an explicit equivalent condition on the parameters that determines when a twisted torus link forms a knot.
\end{question}

Of course, the above is a special case of the following more general question.
\begin{question}
    Is there a closed-form formula in terms of the parameters for determining the number of components of twisted torus links?
\end{question}

By directly tracking each component of the twisted torus link presented as the closure of a braid, we find that the number of components of the link $T(p,q;r,s)$ is the same as the number of factors in the disjoint cycle decomposition of the permutation $\tau\circ\sigma$. Here $\sigma$ and $\tau$ are permutations on $\mathbb{Z}/p\mathbb{Z}=\{0,1,\dots,p-1\}$ defined as follows: 
    \[\sigma(i)=[i-q]_p,\] and  
    \[\tau(i)=    
    \begin{cases}
        [i-s]_r&\text{if }i<r \\
        i &\text{if } i\geq r.\\
        \end{cases}\]
Thus, our algorithm provides a method for computing the number of factors in the disjoint cycle decomposition of these permutations. Interestingly, the proof relies on the topology of links.

\subsection*{Organization of the paper}
In Section \ref{sec:moves}, we examine certain transformations on twisted torus links that preserve the number of components, which form the foundation of our algorithm. In Section \ref{sec:conjectures}, we apply this algorithm to compute the number of components for various families of twisted torus links, including all the conjectures posed in \cite{LC-2016}. In the final section, we discuss a generalization of our algorithm for determining the number of components of $T$-links with three pairs of parameters. Throughout this paper let $[x]_p$ denote the residue of $x$ modulo $p$.

\subsection*{Acknowledgments} Thiago de Paiva, currently a Visiting Researcher at IMPA, is partially supported by IMPA and by a grant from Professor Vinicius Ramos, funded by the Serrapilheira Institute. He also thanks Professor Mikhail Belolipetsky for his kind hospitality during his research stay at IMPA. Kyungbae Park was supported by the National Research Foundation of Korea (NRF) grant funded by the Korea government (MSIT) (No. RS-2022-NR073368).

\section{Moves preserving the number of components}\label{sec:moves}
Our central observation in proving Theorem~\ref{thm:main} is that certain operations on twisted torus links preserve the number of components. For positive integers $p \geq r > 0$ and $q,s\in\mathbb{Z}$, let $NC(p,q;r,s)$ denote the number of components of the twisted torus link $T(p,q;r,s)$. As a preliminary remark, we note the following basic fact.
\begin{lemma}\label{lem:full_twists}
    Let $p\geq r> 0$ and $q, q',s, s'\in\mathbb{Z}$. If $q\equiv q'\pmod{p}$ and $s=s'\pmod{r}$, then
    \[
        NC(p,q;r,s)=NC(p,q';r,s').
    \]
\end{lemma}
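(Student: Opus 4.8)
The plan is to reduce the statement to a purely combinatorial fact about the permutation underlying the defining braid. Recall the standard fact that for any braid $\beta$ on $p$ strands, the number of components of its closure equals the number of cycles in the disjoint cycle decomposition of the image $\pi(\beta)\in S_p$, where $\pi\colon B_p\to S_p$ is the canonical homomorphism sending each $\sigma_i$ to the transposition $(i\ i{+}1)$. (This is exactly the permutation description recalled in the introduction.) Thus it suffices to show that the permutation induced by the braid word $(\sigma_1\cdots\sigma_{p-1})^q(\sigma_1\cdots\sigma_{r-1})^s$ depends only on $[q]_p$ and $[s]_r$.

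First I would compute the images of the two factors under $\pi$. The element $\pi(\sigma_1\cdots\sigma_{p-1})=(1\,2)(2\,3)\cdots(p{-}1\,p)$ is a $p$-cycle on $\{1,\dots,p\}$, hence has order $p$ in $S_p$; consequently $\pi(\sigma_1\cdots\sigma_{p-1})^q$ depends only on the residue $[q]_p$. Likewise $\pi(\sigma_1\cdots\sigma_{r-1})$ is an $r$-cycle supported on $\{1,\dots,r\}$ and fixing $\{r{+}1,\dots,p\}$, so it has order $r$ and $\pi(\sigma_1\cdots\sigma_{r-1})^s$ depends only on $[s]_r$. Since $\pi$ is a homomorphism, negative exponents cause no trouble: $\pi(\sigma_i^{-1})=\pi(\sigma_i)$, and a power of a finite-order element is determined by the exponent modulo the order. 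Combining these, the hypotheses $q\equiv q'\pmod p$ and $s\equiv s'\pmod r$ force $\pi(\sigma_1\cdots\sigma_{p-1})^q=\pi(\sigma_1\cdots\sigma_{p-1})^{q'}$ and $\pi(\sigma_1\cdots\sigma_{r-1})^s=\pi(\sigma_1\cdots\sigma_{r-1})^{s'}$, so the two braids induce the same permutation of the strands. Their closures therefore have the same number of cycles, giving $NC(p,q;r,s)=NC(p,q';r,s')$. Equivalently, one may phrase this topologically: passing from $q$ to $q+p$ (resp. $s$ to $s+r$) inserts a full twist on all $p$ strands (resp. on the first $r$ strands), and a full twist realizes the identity permutation, so it does not alter which strands are joined to which in the closure.

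I do not expect a genuine obstacle here, since the statement is essentially the observation that full twists act trivially on the strand permutation. The only points requiring care are fixing the braid-to-permutation convention consistently, correctly identifying the orders of the two cyclic factors as $p$ and $r$ respectively, and checking that the argument is insensitive to the signs of $q$ and $s$; once these are in place the conclusion is immediate.
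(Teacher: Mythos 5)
Your proof is correct and is essentially the paper's argument made explicit: the paper disposes of this lemma in one sentence by noting that adding full twists to a subset of strands of a closed braid does not change the number of components, which is precisely the fact you establish by passing to the permutation representation $B_p\to S_p$ and using that the images of $(\sigma_1\cdots\sigma_{p-1})$ and $(\sigma_1\cdots\sigma_{r-1})$ are cycles of orders $p$ and $r$ respectively. Your version supplies the combinatorial justification the paper leaves implicit (and which it alludes to in the introduction via the permutations $\sigma$ and $\tau$), so there is nothing to object to.
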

\begin{proof}
   Any additional full twists to a subset of strands in a closed braid do not alter the number of components of the link.
\end{proof}
The following lemma shows that, in the construction of a twisted torus link, the placement of the $s$ additional twists on any set of $r$ consecutive strands yields a link isotopic to the twisted torus link $T(p,q;r,s)$.
\begin{lemma}\label{lem:shift}
    For each $1\leq k\leq p-r+1$, the link given by the closure of the braid with $p$-strands
    \[
        (\sigma_1 \cdots \sigma_{p-1})^q(\sigma_{k}\cdots\sigma_{k+r-2})^s
    \]
    is isotopic to the twisted torus link $T(p,q;r,s)$.
\end{lemma}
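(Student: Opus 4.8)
The plan is to exploit the fact that the closures of two conjugate braids are isotopic, together with the observation that conjugating by the cyclic braid $\delta := \sigma_1\sigma_2\cdots\sigma_{p-1}$ shifts the indices of the twist region by one. Since the torus-link factor $\delta^q$ is itself a power of $\delta$, it is fixed under conjugation by $\delta$, so conjugating the whole braid by $\delta$ moves only the additional-twist block $(\sigma_k\cdots\sigma_{k+r-2})^s$ onto the next group of $r$ consecutive strands while leaving the rest of the word unchanged.

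Concretely, the key step is the braid identity
\[
\delta\,\sigma_i\,\delta^{-1} = \sigma_{i+1}, \qquad 1\le i\le p-2,
\]
which I would verify directly from the braid relations: writing $\sigma_{i+1}\delta = \sigma_{i+1}\sigma_1\cdots\sigma_{p-1}$, one commutes $\sigma_{i+1}$ past $\sigma_1,\dots,\sigma_{i-1}$, applies the relation $\sigma_{i+1}\sigma_i\sigma_{i+1}=\sigma_i\sigma_{i+1}\sigma_i$, and then commutes the resulting $\sigma_i$ past $\sigma_{i+2},\dots,\sigma_{p-1}$ to recover $\delta\sigma_i$. Applying this identity to each generator in the block gives
\[
\delta\,(\sigma_k\cdots\sigma_{k+r-2})^s\,\delta^{-1} = (\sigma_{k+1}\cdots\sigma_{k+r-1})^s,
\]
provided the shifted indices remain admissible, i.e.\ $k+r-1\le p-1$.

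With this in hand, set $\beta_k := \delta^q(\sigma_k\cdots\sigma_{k+r-2})^s$. Conjugating $\beta_k$ by $\delta$ yields exactly $\beta_{k+1}$ whenever $k\le p-r$, because $\delta$ commutes with $\delta^q$. Since conjugate braids have isotopic closures, a short induction shows that the closures of $\beta_1,\beta_2,\dots,\beta_{p-r+1}$ are all isotopic; as $\beta_1$ is by definition $T(p,q;r,s)$, this establishes the claim for every $1\le k\le p-r+1$.

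The main point requiring care is the bookkeeping on indices: the shift identity applies to the block only while its top generator $\sigma_{k+r-2}$ can be pushed up to an existing generator $\sigma_{k+r-1}$, which is precisely the constraint $k\le p-r$ that terminates the induction and matches the range $1\le k\le p-r+1$ in the statement. Once the conjugation identity $\delta\sigma_i\delta^{-1}=\sigma_{i+1}$ is in place, the rest of the argument is purely formal, so I expect the only genuine obstacle to be a clean derivation of this identity (equivalently, a careful description of $\delta$ as the one-strand cyclic rotation) rather than any topological subtlety.
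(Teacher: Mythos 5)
Your proof is correct. It takes a different, more algebraic route than the paper: the paper's proof is a one-step-at-a-time pictorial isotopy (illustrated in its Figure~\ref{fig:shift}) that slides the twist region horizontally across the closed braid, whereas you realize the same one-step shift as conjugation by $\delta=\sigma_1\cdots\sigma_{p-1}$, using the identity $\delta\sigma_i\delta^{-1}=\sigma_{i+1}$ (valid for $1\le i\le p-2$, which is exactly what forces $k\le p-r$) together with the fact that conjugate braids have isotopic closures. Your derivation of the shift identity from the braid relations is correct, and the reduction of the whole lemma to that identity is sound since $\delta$ commutes with $\delta^q$. What your approach buys is a fully algebraic, figure-free argument with transparent index bookkeeping; what the paper's approach buys is a direct geometric picture of the isotopy (and, incidentally, your version would silently correct the index typo $\sigma_{k-r-3}$ in the paper's stated intermediate braid, which should read $\sigma_{k+r-3}$). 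Both arguments have the same inductive skeleton, so either could replace the other.
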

\begin{proof}
    By the isotopy illustrated in Figure \ref{fig:shift}, the closure of the braid
    \[
        (\sigma_1 \cdots \sigma_{p-1})^q(\sigma_{k}\cdots\sigma_{k+r-2})^s
    \]
    for $1< k \leq p-r$ is isotopic to the closure of the braid
    \[
        (\sigma_1 \cdots \sigma_{p-1})^q(\sigma_{k-1}\cdots\sigma_{k-r-3})^s.
    \]
    Applying this isotopy inductively moves the twist region to start at position $k=1$, establishing the desired isotopy to the standard form of the twisted torus link $T(p,q;r,s)$. 
\end{proof}
\begin{figure}[t]
    \includegraphics[width=0.8\textwidth]{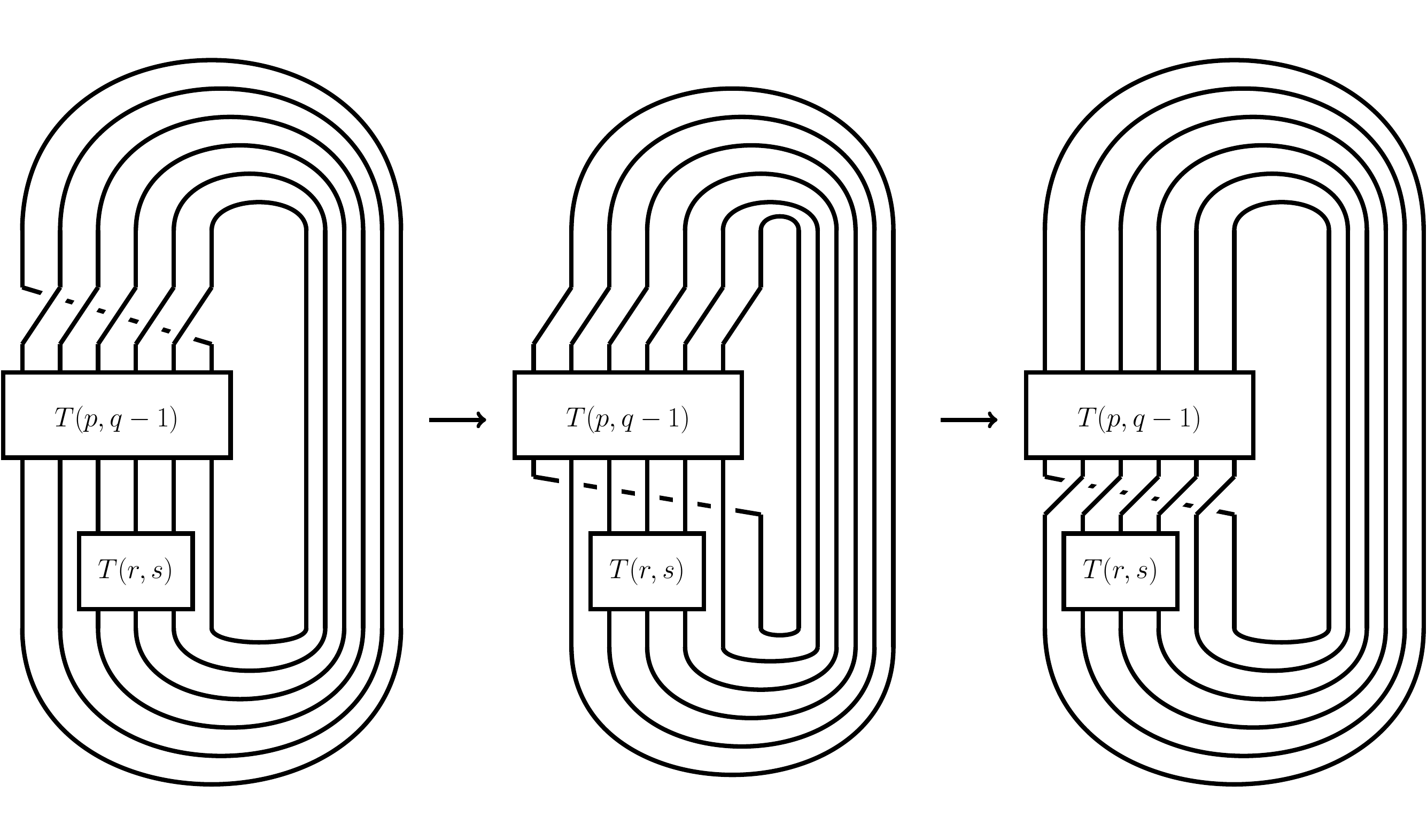}
    \caption{An isotopy that horizontally shifts the additional twist region.}
    \label{fig:shift}
\end{figure}

The following lemma considers the mirror of a twisted torus link.
\begin{lemma}\label{lem:mirror} 
    The mirror image of $T(p,q;r,s)$ is isotopic to $T(p,-q;r,-s)$. In particular, we have the identity:
    \[
        NC(p,q;r,s)=NC(p,-q;r,-s)
    \]
\end{lemma}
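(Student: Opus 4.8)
The plan is to move between braid words using two operations that preserve the link type of a braid closure: taking the mirror reverses every crossing, and an index-reversing symmetry of the braid lets me rewrite the resulting word back into standard twisted-torus form.

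First, recall that $T(p,q;r,s)$ is the closure of $(\sigma_1\cdots\sigma_{p-1})^q(\sigma_1\cdots\sigma_{r-1})^s$. Since the mirror image of any link reverses all crossings, the mirror of $T(p,q;r,s)$ is the closure of the word obtained by replacing each $\sigma_i$ by $\sigma_i^{-1}$, namely
\[
M=(\sigma_1^{-1}\cdots\sigma_{p-1}^{-1})^q(\sigma_1^{-1}\cdots\sigma_{r-1}^{-1})^s.
\]
The goal is to show that the closure $\widehat{M}$ is isotopic to $T(p,-q;r,-s)$, which is the closure of $(\sigma_1\cdots\sigma_{p-1})^{-q}(\sigma_1\cdots\sigma_{r-1})^{-s}$.

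Next I would apply the index-reversing automorphism $\phi\colon B_p\to B_p$ defined by $\phi(\sigma_i)=\sigma_{p-i}$. This is realized geometrically by rotating the cross-sectional disk of the solid torus containing the closed braid by $\pi$ about its center; since this rotation is orientation-preserving and isotopic to the identity, it provides an ambient isotopy that preserves crossing signs, so $\widehat{M}$ is isotopic to $\widehat{\phi(M)}$. A direct computation gives
\[
\phi(M)=(\sigma_{p-1}^{-1}\cdots\sigma_1^{-1})^q(\sigma_{p-1}^{-1}\cdots\sigma_{p-r+1}^{-1})^s=(\sigma_1\cdots\sigma_{p-1})^{-q}(\sigma_{p-r+1}\cdots\sigma_{p-1})^{-s},
\]
using $\sigma_{p-1}^{-1}\cdots\sigma_1^{-1}=(\sigma_1\cdots\sigma_{p-1})^{-1}$ and $\sigma_{p-1}^{-1}\cdots\sigma_{p-r+1}^{-1}=(\sigma_{p-r+1}\cdots\sigma_{p-1})^{-1}$. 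Thus $\widehat{\phi(M)}$ is a braid with full-twist power $-q$ whose $-s$ additional twists act on the last $r$ strands, via the generators $\sigma_{p-r+1},\dots,\sigma_{p-1}$.

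Finally, I would invoke Lemma \ref{lem:shift} with $k=p-r+1$ (twisting power $-s$, full-twist power $-q$) to slide the twist region from the last $r$ strands to the first $r$ strands. This identifies $\widehat{\phi(M)}$ with the closure of $(\sigma_1\cdots\sigma_{p-1})^{-q}(\sigma_1\cdots\sigma_{r-1})^{-s}$, which is exactly $T(p,-q;r,-s)$. Chaining the isotopies yields the mirror identity, and the equality $NC(p,q;r,s)=NC(p,-q;r,-s)$ then follows immediately, since a link and its mirror have the same number of components. I expect the main obstacle to be bookkeeping rather than conceptual: one must verify that $\phi$ is genuinely realized by an orientation-preserving ambient isotopy, so that it does not secretly introduce a second mirror that cancels the first, and carefully track both the position and the generator order of the twist block so that Lemma \ref{lem:shift} applies verbatim.
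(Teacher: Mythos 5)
Your proof is correct and follows essentially the same route as the paper: invert all generators to form the mirror, apply the index-reversing flip $\sigma_i\mapsto\sigma_{p-i}$ (the paper's ``horizontal reflection''), rewrite the reversed inverse blocks as negative powers, and invoke Lemma~\ref{lem:shift} to slide the twist block back to the first $r$ strands. Your indexing $(\sigma_{p-r+1}\cdots\sigma_{p-1})^{-s}$ is in fact the correct version of what the paper writes (the paper's $\sigma_{p+r-1}$ is a typo), and your remark about checking that the flip is orientation-preserving is exactly the right point of care.
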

\begin{proof}       
    Consider the mirror image of $T(p,q;r,s)$, which is represented by the closure of the braid
    \[
        (\sigma_1^{-1}\cdots\sigma_{p-1}^{-1})^q(\sigma_1^{-1} \cdots \sigma_{r-1}^{-1})^s.
    \]
    Applying a horizontal reflection (braid flip) yields the braid
    \begin{align*}
        &(\sigma_{p-1}^{-1}\cdots\sigma_{1}^{-1})^q(\sigma_{p-1}^{-1} \cdots \sigma_{p+r-1}^{-1})^s\\
        =&(\sigma_{1}\cdots\sigma_{p-1})^{-q}(\sigma_{p+r-1} \cdots \sigma_{p-1})^{-s}
    \end{align*}
    By Lemma~\ref{lem:shift}, the closure of this braid is isotopic to $T(p,-q;r,-s)$.  
\end{proof}

The following lemma computes the number of components of the twisted torus link in the final step of the algorithm described in Theorem~\ref{thm:main}.
\begin{lemma}\label{lem:final_step} 
Let $p\geq r> 0$ and $q,s\in\mathbb{Z}$. Then we have the following:
    \begin{enumerate}
        \item $NC(p,q;r,s)= (p-r) + \gcd(r,s)$ if $[q]_p=0$
        \item $NC(p,q;r,s)=\gcd(p,q)$ if $[s]_r= 0$
    \end{enumerate} 
\end{lemma}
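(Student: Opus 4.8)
The plan is to reduce each case, via Lemma~\ref{lem:full_twists}, to a link whose number of components is already understood. In both parts the hypothesis states that one of the two twisting parameters is a multiple of the number of strands it acts upon, so the corresponding twists amount to (possibly several) full twists and can be deleted without changing the number of components.

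I would treat part (2) first, since it is the simpler reduction. As $[s]_r = 0$ means $r \mid s$, Lemma~\ref{lem:full_twists} gives $NC(p,q;r,s) = NC(p,q;r,0)$. The braid word then collapses to $(\sigma_1\cdots\sigma_{p-1})^q$, which is the standard braid presentation of the torus link $T(p,q)$. Invoking the classical fact that $T(a,b)$ has $\gcd(a,b)$ components, I would conclude $NC(p,q;r,0) = \gcd(p,q)$; this also covers the degenerate case $q \equiv 0 \pmod p$, where the closure is $p$ split unknots and $\gcd(p,0) = p$.

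For part (1), since $[q]_p = 0$ means $p \mid q$, Lemma~\ref{lem:full_twists} instead gives $NC(p,q;r,s) = NC(p,0;r,s)$, whose braid word on $p$ strands is simply $(\sigma_1\cdots\sigma_{r-1})^s$. The key observation is that this word uses only the generators $\sigma_1,\dots,\sigma_{r-1}$, so strands $r+1,\dots,p$ are never involved in any crossing; upon closure they contribute $p-r$ split, unknotted components. The first $r$ strands, meanwhile, close up to the standard braid presentation of the torus link $T(r,s)$, contributing $\gcd(r,s)$ components. Summing the two gives $NC(p,0;r,s) = (p-r) + \gcd(r,s)$.

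Once Lemma~\ref{lem:full_twists} and the component count $NC(T(a,b)) = \gcd(a,b)$ are in hand, the argument is essentially bookkeeping, so I do not anticipate a genuine obstacle. The one step I would write out most carefully is the decomposition in part (1): verifying that the $p-r$ untouched strands really do close to disjoint unknots and that the braided part is precisely $T(r,s)$, so that the two component counts add without overlap.
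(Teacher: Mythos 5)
Your proposal is correct and follows essentially the same route as the paper: reduce via Lemma~\ref{lem:full_twists} to $T(p,0;r,s)$ or $T(p,q;r,0)$, then identify the former as $T(r,s)$ split with $p-r$ unknots and the latter as the torus link $T(p,q)$. The extra care you devote to checking that the untouched strands close to split unknots is a welcome elaboration, but the argument is the same.
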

\begin{proof}        
    If $[q]_p=0$, then by Lemma \ref{lem:full_twists}, we have $NC(p,q;r,s)=NC(p,0;r,s)$. The link $T(p,0;r,s)$ consists of the torus link $T(r,s)$ along with $p-r$ unknots, resulting in a total of $(p-r) + \gcd(r,s)$ components. 
    
    Similarly, if $[s]_r=0$, then $NC(p,q;r,s)=NC(p,q;r,0)$ by Lemma \ref{lem:full_twists}. In this case, the link $T(p,q;r,0)$ reduces to the torus link $T(p,q)$, which has $\gcd(p,q)$ components.
\end{proof}

The following two key lemmas ensure that our algorithm described in Theorem~\ref{thm:main} functions correctly, as they relate a given twisted torus link to another with fewer strands while preserving the number of components. We begin with the case where $p \geq q \geq r$.
\begin{lemma}\label{lem:swapping_p_and_q}
    Consider the twisted torus link $T(p,q;r,s)$ with $p \geq q \geq r>0$, and $s\in\mathbb{Z}$. Then we have the identities:
    \[
        NC(p,q;r,s)=NC(q,p;r,-s)=NC(q,-p;r,s).
    \]
\end{lemma}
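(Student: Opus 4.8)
The plan is to derive both identities from the classical symmetry $T(p,q)\cong T(q,p)$ of torus links together with the mirror relation of Lemma~\ref{lem:mirror}. First I would note that the second identity requires no new work: applying Lemma~\ref{lem:mirror} to the quadruple $(q,p,r,-s)$ gives $NC(q,p;r,-s)=NC(q,-p;r,s)$. Hence the entire content of the lemma is the first identity $NC(p,q;r,s)=NC(q,p;r,-s)$, and I would in fact aim for the stronger statement that $T(p,q;r,s)$ and $T(q,p;r,-s)$ are isotopic links (which of course forces equality of component counts).

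For this, I would realize $T(p,q)$ as a curve on the standard Heegaard torus $\Sigma=\partial V\subset S^3$ and invoke the orientation-preserving homeomorphism $\phi\colon S^3\to S^3$ interchanging the two solid tori of the genus-one splitting. On $H_1(\Sigma)$ this map exchanges meridian and longitude, so it carries the $(p,q)$-curve to the $(q,p)$-curve, recovering $T(p,q)\cong T(q,p)$. The $s$ additional twists on $r$ strands of $T(p,q;r,s)$ occupy a small ball $B$ meeting $\Sigma$ in $r$ parallel arcs, and the strategy is to track $\phi(B)$. Since the $p$-strand braid presentation reads the intersections of the link with a meridian disk of $V$ and the $q$-strand presentation reads the intersections with a meridian disk of the complementary solid torus, the hypothesis $q\geq r$ guarantees that the $r$ twisted strands, which form part of a single transverse slice, map to $r$ adjacent parallel strands of the $q$-strand braid. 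Thus $\phi$ sends $T(p,q;r,s)$ to a twisted torus link of the form $T(q,p;r,s')$, and by Lemma~\ref{lem:shift} the precise location of the resulting twist region is immaterial.

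It then remains to identify the sign $s'$. The key point is that, although $\phi$ is orientation-preserving on $S^3$, it acts on $\Sigma$ by the matrix $\left(\begin{smallmatrix}0&1\\ 1&0\end{smallmatrix}\right)$, which reverses the orientation of the surface carrying the twist region. A right-handed twist of the $r$ strands is therefore carried to a left-handed one, so $s'=-s$ and $T(p,q;r,s)\cong T(q,p;r,-s)$. Combining this with Lemma~\ref{lem:mirror} as in the first paragraph produces the full chain $NC(p,q;r,s)=NC(q,p;r,-s)=NC(q,-p;r,s)$. As a consistency check, this sign reversal is exactly what appears in the $q_i\geq r_i$ branch of the recursion in Theorem~\ref{thm:main}, where the twist parameter passes to $[-s_i]_{r_i}$.

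I expect the main obstacle to be the careful bookkeeping in the middle step: verifying, ideally with an explicit diagram, that under the flip the $r$ twisted strands genuinely reassemble into $r$ adjacent parallel strands of the $q$-strand braid, which is precisely where the hypothesis $q\geq r$ is used, and that the handedness of the twist reverses. To make this rigorous rather than merely pictorial, I would supplement the homological computation with an explicit isotopy of the flat-torus diagram, exhibiting the localized twist box before and after the flip; the orientation-reversal on $\Sigma$ then visibly turns $s$ into $-s$. This keeps the proof resting only on the two tools already available, namely the torus symmetry and the mirror relation of Lemma~\ref{lem:mirror}, with $q\geq r$ identified as the exact condition making the twist region transform cleanly.
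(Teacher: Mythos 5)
The reduction of the second identity to Lemma~\ref{lem:mirror} is fine, but the first identity is where your argument breaks: the strengthened claim that $T(p,q;r,s)$ and $T(q,p;r,-s)$ are \emph{isotopic} is false, and the mechanism you propose for the sign flip is not valid. The homeomorphism $\phi$ exchanging the two solid tori is orientation-preserving on $S^3$, and the handedness of a twist region is determined by the orientation of the ambient ball in $S^3$, not by the orientation of the reference surface $\Sigma$. That $\phi$ acts on $H_1(\Sigma)$ by a determinant $-1$ matrix only reflects that it swaps the two sides of $\Sigma$; its restriction to a ball neighborhood of the twist box is still orientation-preserving, so $s$ positive half-twists are carried to $s$ positive half-twists, not to negative ones. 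Concretely, $T(3,2;2,1)$ is the closure of $\sigma_1\sigma_2\sigma_1\sigma_2\sigma_1$, which destabilizes to $T(2,4)$ (linking number $2$), whereas $T(2,3;2,-1)$ is the closure of $\sigma_1^2$, the Hopf link (linking number $1$); these are not isotopic, although both have two components. What the torus swap actually gives, up to repositioning the twist box via Lemma~\ref{lem:shift}, is an isotopy $T(p,q;r,s)\cong T(q,p;r,s)$ with the sign of $s$ unchanged.

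The paper's proof accordingly does not rest on isotopy alone: after the flat-torus rotation it pushes the twist box below the $(q,p)$ torus braid, obtaining the twist on $r$ adjacent strands with the \emph{same} handedness, and then applies a genuinely non-isotopy move, changing every crossing in the twist sub-braid. A crossing change preserves the shadow of the diagram and hence the number of components, but not the link type; this is exactly where $-s$ enters and why the lemma asserts only an equality of $NC$ rather than an isotopy. To repair your argument you would need to replace the ``orientation reversal of $\Sigma$'' step with this crossing-change observation (or otherwise establish $NC(q,p;r,s)=NC(q,p;r,-s)$).
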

\begin{proof}
    Place the twisted torus link $T(p,q;r,s)$ on a (thickened) flat torus as illustrated in the first diagram of Figure~\ref{fig:swapping_p_and_q}. Apply an isotopy that transforms the $(p,q)$ torus braid into the $(q,p)$ torus braid; this corresponds to a 180-degree rotation about a diagonal of the flat torus. After this rotation, the sub-braid $B=(\sigma_1\sigma_2\cdots\sigma_{r-1})^{s}$ is transformed into $B'=(\sigma_{r-1}\sigma_{r-2}\cdots\sigma_1)^s$, embedded within the braid $(\sigma_1\sigma_2\dots\sigma_{q-1})^p$. Since $B'$ involves fewer than $q$ strands, it can be pushed downward to lie vertically after the $(q,p)$ torus braid, as shown in Figure~\ref{fig:swapping_p_and_q}. This yields the braid
    \[
        (\sigma_1\dots\sigma_{q-1})^p(\sigma_{q-r+1}\dots\sigma_{q-1})^{s},
    \]
    whose closure has the same number of components as the closure of the braid:
    \begin{align*}
        &(\sigma_1\cdots\sigma_{q-1})^p(\sigma_{q-r+1}^{-1}\cdots\sigma_{q-1}^{-1})^{s}\\
        =&(\sigma_1\cdots\sigma_{q-1})^p(\sigma_{q-r+1}\cdots\sigma_{q-1})^{-s}.
    \end{align*}
    By Lemma~\ref{lem:shift}, this link is isotopic to the closure of the braid
    \[
        (\sigma_1\sigma_2\cdots\sigma_{q-1})^p(\sigma_{1}\sigma_{2}\cdots\sigma_{r-1})^{-s},
    \]
    which represents the twisted torus link $T(q,p;r,-s)$.
    The final identity follows from Lemma~\ref{lem:mirror}, since $T(q,-p;r,s)$ is the mirror image of $T(q,p;r,-s)$.
\end{proof}
\begin{figure}[t]
    \centering
    \includegraphics[width=1\textwidth]{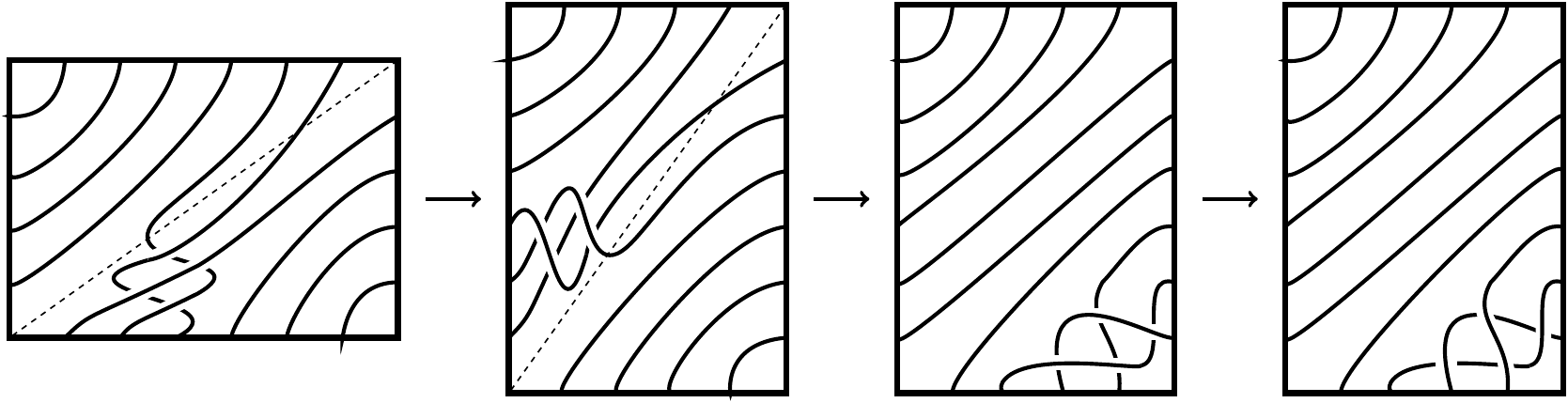}
    \caption{A transformation of $T(6,4;3,2)$ to $T(4,6;3,-2)$ on a thickened flat torus.}
    \label{fig:swapping_p_and_q}
\end{figure}

Now, we turn to the case where $p\geq r>q$.
\begin{lemma}\label{lem:forming_r_strands}
    Consider the twisted torus link $T(p,q;r,s)$ with $p \geq r > q > 0$, and $s\in\mathbb{Z}$. Then, \[NC(p,q;r,s)=NC(r,s+q;q,r-p).\]  
\end{lemma}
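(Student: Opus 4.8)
The plan is to argue geometrically on a thickened flat torus, in the same spirit as the proof of Lemma~\ref{lem:swapping_p_and_q}, but adapted to the regime $r>q$, where the twist region can no longer be absorbed into the rotated torus braid. First I would place $T(p,q;r,s)$ on the flat torus so that the $(p,q)$-torus braid $(\sigma_1\cdots\sigma_{p-1})^q$ is drawn with slope on the torus and the twist block $B=(\sigma_1\cdots\sigma_{r-1})^s$ sits as an $(r,s)$-torus sub-braid on $r$ adjacent strands. As in Lemma~\ref{lem:swapping_p_and_q}, I would then apply the $180^\circ$ rotation about the diagonal, which interchanges the two directions of the torus and turns the $(p,q)$-braid into the $(q,p)$-braid.

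The decisive difference from Lemma~\ref{lem:swapping_p_and_q} is that now $r>q$, so the block $B$, which spans $r$ adjacent strands, no longer fits inside the $q$ strands of the rotated torus braid: after the rotation $B$ becomes a twist box meeting $r$ strand-segments of the $(q,p)$-braid, and these segments wind around more than once. The key step is to \emph{unwind} this overflowing box. The $r$ strand-segments passing through it, together with the $q$ windings of the rotated braid, reorganize onto $r$ strands into a single $(r,s+q)$-torus braid: the winding of the original block ($s$) and the winding contributed by the torus braid ($q$) combine additively, which explains the entry $s+q$. Meanwhile the $p-r$ strands left outside the block, reversed by the rotation, get pushed off to the side and reassemble as a twist region of $r-p$ twists on $q$ strands, the sign recording the reversal. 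Having produced, on $r$ strands, an $(r,s+q)$-torus braid together with this twist region of $r-p$ twists on $q$ strands, I would normalize with Lemma~\ref{lem:shift} to bring the new twist region to the first $q$ strands, with Lemma~\ref{lem:full_twists} to reduce the exponents, and with Lemma~\ref{lem:mirror} to interpret the negative twist $r-p$, arriving at $T(r,s+q;q,r-p)$.

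The main obstacle is the bookkeeping in this unwinding step: making the overflow isotopy precise and extracting the exact parameters, in particular verifying that the two windings add to $s+q$ and that the leftover $p-r$ strands contribute exactly $r-p$ (with the correct sign) rather than some other residue. I would deliberately carry this out by a direct isotopy valid for every $s\in\mathbb{Z}$, rather than invoking the Birman--Kofman transpose duality for positive $T$-links \cite{Birman_09}: that duality identifies the $T$-link $T((r,s),(p,q))$ with $T(q+s,r;q,p-r)$ only when $s,q>0$, and recovering the stated form would additionally require swapping the first two parameters, which is licensed by Lemma~\ref{lem:swapping_p_and_q} only when $q+s\ge r$. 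To guard against sign or off-by-one errors in the exponents, I would cross-check the resulting quadruple on small cases, for instance $T(4,2;3,1)=T(3,3;2,-1)$, by comparing the cycle counts of the associated permutations $\tau\circ\sigma$.
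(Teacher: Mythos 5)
Your high-level picture (reduce to $r$ strands, see the two windings combine into $s+q$, and have the leftover $p-r$ strands form a residual twist block on $q$ strands) matches what the paper actually establishes, but your route and your justification both have real problems. The paper does not perform your diagonal-rotation-plus-unwinding on the flat torus; it quotes \cite[Proposition 3.2]{Lorenzknots} to rewrite $T(p,q;r,s)$ (for $s>0$) as the closure of the $r$-strand braid $(\sigma_1\cdots\sigma_{r-1})^{q}(\sigma_{r-1}\cdots\sigma_{r-q+1})^{p-r}(\sigma_1\cdots\sigma_{r-1})^{s}$, then cyclically slides the last block $(\sigma_1\cdots\sigma_{r-1})^{s}$ around the closure to get $(\sigma_1\cdots\sigma_{r-1})^{s+q}(\sigma_{q-1}\cdots\sigma_{1})^{p-r}$, and finally \emph{changes all crossings} in the block $(\sigma_{q-1}\cdots\sigma_{1})^{p-r}$ to obtain $(\sigma_1\cdots\sigma_{r-1})^{s+q}(\sigma_{1}\cdots\sigma_{q-1})^{-(p-r)}=T(r,s+q;q,r-p)$. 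That last move is explicitly \emph{not} an isotopy --- it changes the link type and is legitimate only because the number of components of a closed braid depends solely on the underlying permutation. Your entire unwinding step, which is the whole content of the lemma, is left as an acknowledged ``obstacle,'' so as written the proposal does not yet contain a proof.

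More importantly, your stated goal of ``a direct isotopy valid for every $s\in\mathbb{Z}$'' arriving at $T(r,s+q;q,r-p)$ cannot succeed: $T(p,q;r,s)$ is in general not isotopic to $T(r,s+q;q,r-p)$, only equal to it in component count, precisely because the leftover block appears with positive crossings in reversed order, $(\sigma_{q-1}\cdots\sigma_1)^{p-r}$, and converting that to the negative block $(\sigma_1\cdots\sigma_{q-1})^{-(p-r)}$ requires the crossing change. Your phrase ``the sign recording the reversal'' conflates a reversed-order positive word with a negated word; these give isotopic \emph{closed} braids only in special situations, and here the passage between them is exactly the non-isotopy move. Relatedly, Lemma~\ref{lem:mirror} cannot be used ``to interpret the negative twist $r-p$'': it mirrors the whole link, negating \emph{both} exponents ($q\mapsto -q$ and $s\mapsto -s$), so it cannot flip the sign of a single sub-block. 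To repair the argument you should either (i) justify the $r$-strand braid form (the content of \cite[Proposition 3.2]{Lorenzknots}, essentially the Birman--Kofman trip/transpose rewriting you chose to avoid), then conjugate and invoke the permutation argument for the sign flip, or (ii) keep your torus-rotation picture but replace the final appeal to isotopy and to Lemma~\ref{lem:mirror} by the observation that reversing the crossings of a sub-block preserves the induced permutation and hence the number of components; in either case you also need to say how the case $s\le 0$ is handled (the paper's own proof starts from $s>0$, with the general case following from Lemma~\ref{lem:full_twists}).
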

\begin{proof}
    Let $p \geq r > q > 0$, and $s>0$. Then, by \cite[Proposition 3.2]{Lorenzknots}, the twisted torus link $T(p,q;r,s)$ is isotopic to the link represented by the braid with $r$ strands
    \[
        (\sigma_1\sigma_2\dots\sigma_{r-1})^{q}(\sigma_{r-1}\sigma_{r-2}\dots\sigma_{r-q+1})^{p-r}(\sigma_1\sigma_2\dots\sigma_{r-1})^{s},
    \]
    as illustrated in the second diagram of Figure~\ref{fig:forming_r_strands} for the case of the twisted torus link $T(8, 4; 5, 3)$.

    We reposition the last $s$ horizontal strands of the sub-braid $(\sigma_1\sigma_2\dots\sigma_{r-1})^{s}$ around the braid closure to obtain the braid
    \[
        (\sigma_1\sigma_2\dots\sigma_{r-1})^{s+q}(\sigma_{q-1}\sigma_{q-2}\dots\sigma_{1})^{p-r},
    \]
    as illustrated in the fourth diagram of Figure~\ref{fig:forming_r_strands}.

    Next, we change all crossings in the sub-braid $(\sigma_{q-1}\sigma_{q-2}\dots\sigma_{1})^{p-r}$. While this operation alters the linking type, it preserves the number of components and yields the braid 
    \begin{align*}
        &(\sigma_1\sigma_2\dots\sigma_{r-1})^{s+q}(\sigma_{q-1}^{-1}\sigma_{q-2}^{-1}\dots\sigma_{1}^{-1})^{p-r}\\
        =&(\sigma_1\sigma_2\dots\sigma_{r-1})^{s+q}(\sigma_{1}\sigma_{2}\dots\sigma_{q-1})^{-(p-r)}, 
    \end{align*}
    which represents the twisted torus link $T(r,s+q;q,r-p)$, as desired.
\end{proof}

\begin{figure}[t]
    \centering
    \includegraphics[width=\textwidth]{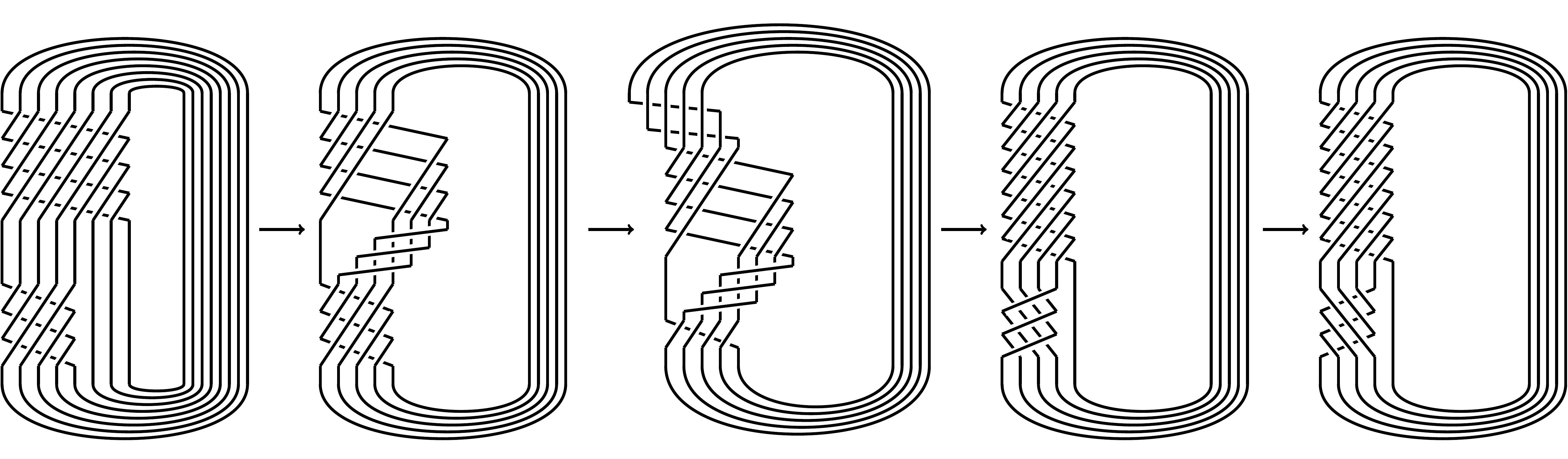}
    \caption{A transformation from $T(8,4;5,3)$ to $T(5,7;4,-3)$ that preserves the number of components.}
    \label{fig:forming_r_strands}
\end{figure}

We are now ready to prove Theorem~\ref{thm:main}.
\begin{proof}[Proof of Theorem \ref{thm:main}]
    Let $T(p,q;r,s)$ be a twisted torus link with positive integers $p\geq r> 0$ and $q,s\in\mathbb{Z}$, and let $\{(p_i,q_i,r_i,s_i)\}_{i=1}^n$ be the sequence of quadruples defined as according to the algorithm described in Theorem~\ref{thm:main}.
    
    We first show that the sequence $\{(p_i,q_i,r_i,s_i)\}_{i=1}^n$ terminates. Observe that, by the recursive formula and an induction argument, we have $p_i>q_i\geq0$ and $p_i\geq r_i>s_i\geq 0$ for all $i$. If $p_i=r_i$ for some $i$, then since $q_i<r_i$, it follows that $s_{i+1}=0$, and hence the sequence terminates at this step. Now, suppose $p_i>r_i$ for all $i$. Since $p_{i+1}$ is either $q_i$ or $r_i$, and both are strictly less than $p_i$, it follows that $\{p_i\}$ is strictly decreasing. Therefore, the sequence must eventually terminate when either $q_{i} = 0$ or $s_{i} = 0$.
    
    It follows directly from Lemmas~\ref{lem:full_twists}, \ref{lem:swapping_p_and_q}, and \ref{lem:forming_r_strands} that 
    \[
        NC(p,q;r,s)=NC(p_{1}, q_{1}, r_{1}, s_{1})
    \] 
    and 
    \[
        NC(p_{i}, q_{i}, r_{i}, s_{i})=NC(p_{i+i}, q_{i+1}, r_{i+1}, s_{i+1})
    \] for all $i=1,\dots ,n-1$. Therefore, we conclude that $NC(p,q;r,s)=NC(p_n,q_n,r_n,s_n)$, where the final value is computed using Lemma~\ref{lem:final_step}.
\end{proof}

\section{Examples}\label{sec:conjectures}
In this section, we apply our algorithm to specific families of twisted torus links to illustrate how it effectively determines their number of components.

\subsection{Case: $r=2$}
One of the special cases of twisted torus knots or links is when $r=2$. In particular, for twisted torus knots $T(p,q;2,2s')$ with $\gcd(p,q)=1$, the Alexander polynomial is computed in \cite{Morton-2006}, Knot Floer homology is discussed in \cite{Vafaee-2015}, and the Jones polynomial is studied in \cite{Bavier-Doleshal-2023}. 
\begin{theorem}
    For the twisted torus link $T(p,q;2,s)$ with $p\geq 2$ and $q,s\in\mathbb{Z}$, we have
    \[
        NC(p,q;2,s)=\begin{cases}
            \gcd(p,q) &\text{if $s$ is even},\\
            2 &\text{if $\gcd(p,q)=1$ and $s$ is odd},\\ 
            \gcd(p,q)-1 & \text{if $\gcd(p,q)\geq 2$ and $s$ is odd }.\\
        \end{cases}
    \]
\end{theorem}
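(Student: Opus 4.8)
The plan is to run the algorithm of Theorem~\ref{thm:main} directly, exploiting the fact that with $r=2$ the twist data is governed entirely by the parity of $s$. First I would dispose of the even case: if $s$ is even then $s_1=[s]_2=0$, so the recursion terminates immediately at $n=1$ with terminal quadruple $(p_1,q_1,r_1,s_1)=(p,[q]_p,2,0)$, and Lemma~\ref{lem:final_step}(2) gives $NC(p,q;2,s)=\gcd(p,[q]_p)=\gcd(p,q)$.

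For the odd case, $s_1=[s]_2=1$, and the starting quadruple is $(p,[q]_p,2,1)$. The key observation is that the pair $(r_i,s_i)=(2,1)$ is invariant under branch~1 of the recursion: since $r_{i+1}=r_i$ and $[-1]_2=1$, branch~1 acts on the quadruple simply as the Euclidean step $(p_i,q_i)\mapsto(q_i,[p_i]_{q_i})$ while fixing $(r,s)=(2,1)$. The recursion switches to branch~2 only when $q_i<r_i=2$, i.e.\ when $q_i\in\{0,1\}$. So as long as $q_i\geq 2$ we are simply performing the Euclidean algorithm on $(p,q)$, and every $q_i$ is one of the successive remainders, all of which are multiples of $g:=\gcd(p,q)$.

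Then I would split on the value of $g$. If $g\geq 2$, every nonzero remainder is a multiple of $g$, hence $\geq 2$, so no $q_i$ ever equals $1$; the algorithm therefore remains in branch~1 until $q_n=0$, at which stage $p_n=g$ and $(r_n,s_n)=(2,1)$. Lemma~\ref{lem:final_step}(1) then yields $NC=(g-2)+\gcd(2,1)=g-1$, which is the third case. If instead $g=1$, the remainder sequence reaches the last nonzero remainder $1$ before reaching $0$, so at some step $q_i=1<2=r_i$ and branch~2 fires on the quadruple $(p_i,1,2,1)$. A direct computation gives $(p_{i+1},q_{i+1},r_{i+1},s_{i+1})=(2,[2]_2,1,[2-p_i]_1)=(2,0,1,0)$, so the recursion terminates with $q_n=0$, and Lemma~\ref{lem:final_step}(1) gives $NC=(2-1)+\gcd(1,0)=2$, the second case.

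The one point requiring care --- and the only real obstacle --- is verifying that the branch choice behaves exactly as claimed throughout: namely that when $g\geq 2$ the sequence never enters the branch-2 regime (which hinges on every Euclidean remainder being divisible by $g$, hence at least $g\geq 2$), and that when $g=1$ branch~2 fires exactly once and lands on the terminal quadruple $(2,0,1,0)$. I would also check the degenerate inputs $[q]_p\in\{0,1\}$ separately to confirm consistency with the three cases; for instance $[q]_p=0$ forces $p\mid q$, so $g=p\geq 2$ and the value $g-1=p-1$ is recovered directly from the terminal quadruple $(p,0,2,1)$, while $[q]_p=1$ triggers branch~2 at the very first step and reproduces the value $2$.
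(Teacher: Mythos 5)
Your proposal is correct and follows essentially the same route as the paper: run the algorithm of Theorem~\ref{thm:main}, observe that branch~1 fixes $(r_i,s_i)=(2,1)$ so the $(p_i,q_i)$ just perform the Euclidean algorithm, and evaluate the terminal quadruple. The only cosmetic difference is that the paper defers the case $\gcd(p,q)\geq 2$, $s$ odd to its subsequent theorem on $\gcd(p,q)\geq r$, whereas you resolve it in place by noting that all Euclidean remainders are multiples of $\gcd(p,q)\geq 2$, so branch~2 never fires.
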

\begin{proof}
    This result follows directly from Theorem~\ref{thm:main}. Let $\{(p_i,q_i,r_i,s_i)\}$ be the sequence defined therein. If $s$ is even, then $(p_1,q_1,r_1,s_1)=(p,q,2,0)$, and the sequence terminates immediately with $n=1$. Hence, $NC(p,q,2,s)=\gcd(p,q)$.
    
    Now suppose $s$ is odd and $\gcd(p,q) = 1$. Then $(p_1,q_1,r_1,s_1)=(p,[q]_p,2,1)$. As long as $q_i\geq r$ for all $i\leq k$, the sequence $\{(p_i,q_i)\}_{i=1}^k$ mirrors the steps of the Euclidean algorithm for computing $\gcd(p,q)$. Since $\gcd(p,q)=1$, this leads to the sequence $\{(p_n,q_n,r_n,s_n)\}$: 
    \[
        (p,[q]_p,2,1), \dots, (p_{n-1},1,2,1),(2,0,1,0),
    \]
    where $p_{n-1}\geq 2$, and thus $NC(p,q,2,s)=2$.

   The final case, where $s$ is odd and $\gcd(p,q)\geq 2$, will be treated in the following theorem, which addresses a more general setting.
\end{proof}

\subsection{Case: $\gcd(p,q)\geq r$} 
A special case in which an explicit formula for $NC(p,q;r,s)$ can be deduced in terms of the parameters is when $\gcd(p,q)\geq r$.
\begin{theorem}
    For the twisted torus link $T(p,q;r,s)$ with $\gcd(p,q)\geq r$, we have 
    \[
        NC(p,q;r,s)=\gcd(p,q)-r+\gcd(r,s).
    \] 
\end{theorem}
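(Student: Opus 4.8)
The plan is to apply the algorithm of Theorem~\ref{thm:main} directly and to show that, under the hypothesis $\gcd(p,q)\geq r$, the recursion never enters its second branch; consequently the first two coordinates simply run the Euclidean algorithm on $(p,q)$ while $r$ is held fixed.

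Write $g=\gcd(p,q)$. Since $[q]_p\equiv q\pmod p$, the initial quadruple $(p_1,q_1,r_1,s_1)=(p,[q]_p,r,[s]_r)$ satisfies $\gcd(p_1,q_1)=g$ and $r_1=r$. The central claim is that as long as $q_i>0$ the recursion uses the branch $q_i\geq r_i$, so that $r_i=r$ for every $i$ and the pairs $(p_i,q_i)$ are exactly the successive pairs produced by the Euclidean algorithm for $g$. The Euclidean step preserves the gcd, so $\gcd(p_i,q_i)=g$ throughout; moreover every positive remainder arising in the Euclidean algorithm on $(p,q)$ is a positive multiple of $g$, hence at least $g$. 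Therefore $q_i\geq g\geq r=r_i$ whenever $q_i>0$, which shows the second branch is never triggered. This branch-selection claim is the only nontrivial point, and I expect it to be the main obstacle; everything else is bookkeeping.

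Next I would track the twist parameter. In the first branch the update is $s_{i+1}=[-s_i]_r$, so $s_i$ alternates between $[s]_r$ and $[-s]_r$. In particular some $s_i$ vanishes precisely when $[s]_r=0$, and in every case $\gcd(r,s_i)=\gcd(r,[s]_r)=\gcd(r,-s)=\gcd(r,s)$.

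Finally I would read off the answer from the two termination cases of Theorem~\ref{thm:main}. If $[s]_r=0$, then $s_1=0$ and the sequence terminates at $n=1$ via the $s_n=0$ case, giving $NC=\gcd(p_1,q_1)=g$; since then $r\mid s$ we have $\gcd(r,s)=r$, so the claimed value $g-r+\gcd(r,s)=g-r+r=g$ matches. Otherwise no $s_i$ is zero, so termination occurs through $q_n=0$, at which point $p_n=g$, $r_n=r$, and $s_n\in\{[s]_r,[-s]_r\}$; Theorem~\ref{thm:main} then gives $NC=p_n-r_n+\gcd(r_n,s_n)=g-r+\gcd(r,s)$, as desired.
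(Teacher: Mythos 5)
Your proof is correct and follows essentially the same route as the paper: run the algorithm of Theorem~\ref{thm:main}, observe that $\gcd(p,q)\geq r$ forces the first branch at every step (so $(p_i,q_i)$ performs the Euclidean algorithm on $(p,q)$ with $r_i=r$ fixed), and read off the terminal value. You are in fact slightly more careful than the paper, which asserts that $s_i=[s]_r$ stays constant, whereas it alternates between $[s]_r$ and $[-s]_r$ --- as you note, this is harmless because $\gcd(r,s_i)=\gcd(r,s)$ is invariant.
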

\begin{proof}
    Since $\gcd(p,q)\geq r$, the sequence $\{(p_i,q_i,r_i,s_i)\}$ in Theorem~\ref{thm:main} evolves such that $(p_i,q_i)$ follows the steps  of the Euclidean algorithm applied to $p$ and $q$, while $r_i=r$ and $s_i=s_1=[s]_r$ remain fixed throughout. The sequence therefore terminates at $(\gcd(p,q),0;r,[s]_r)$, and the result follows by Theorem~\ref{thm:main}.
\end{proof}

\subsection{Case: $s=\pm q$} 
As mentioned earlier, several conjectures regarding the number of components of  specific families of twisted torus links, particularly those with \texorpdfstring{$s=\pm q$}{s=±q}, were proposed in \cite{LC-2016}, based on computational evidence. In this section, we show that our algorithm can be applied to compute $NC(p,q;r,s)$ for these families, thereby verifying all the conjectures stated in Section~4 of \cite{LC-2016}.

The following result addresses twisted torus links $T(p,q;r,\pm q)$ under specific conditions on the parameter $r$.
\begin{theorem}\label{theo4}
    For the twisted torus link $T(p,q;r,\pm q)$ with $p\geq r>0$, $q>0$, and $p \not\equiv 0\pmod{q}$, the number of components is given as follows:\\
    \begin{itemize}
        \item If $r\equiv 1$, $q$, or $2q-1 \pmod{2q}$, then
        \[
            NC(p,q,r,q)=\gcd(p,q).
        \] 
        \item If $r>q$, then
        \[
            NC(p,q,r,-q)= r-q+\gcd(q,r-p).
        \]
    \end{itemize}
\end{theorem}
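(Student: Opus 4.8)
The plan is to evaluate both formulas by running the recursive procedure of Theorem~\ref{thm:main} and reading off the terminal quadruple in each case.

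I would begin with the second formula, which is essentially immediate. With $p \geq r > q > 0$ and $s = -q$, the hypotheses give $[q]_p = q$ and $[-q]_r = r - q$, so the initial quadruple is $(p_1,q_1,r_1,s_1) = (p,q,r,r-q)$. Since $q_1 = q < r = r_1$, the second branch applies and yields $q_2 = [(r-q)+q]_r = [r]_r = 0$; hence the sequence stops at $n = 2$ with $(p_2,r_2,s_2) = (r,q,[r-p]_q)$. The $q_n = 0$ case of the terminal rule then gives $NC = p_2 - r_2 + \gcd(r_2,s_2) = r - q + \gcd(q,r-p)$, as claimed.

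For the first formula I would run the procedure with $s = q$ and show that each residue condition drives the sequence to a terminal quadruple of value $\gcd(p,q)$. The organizing observation is that the three conditions are $r \equiv 1,\, q,\, -1 \pmod{2q}$, whose representatives in $(0,2q]$ are exactly $r \in \{1,\,q,\,2q-1\}$. The cases $r = 1$ (empty twist region) and $r = q$ (where $s_1 = [q]_q = 0$) terminate at the first step and give $\gcd(p,q)$ directly; in every remaining case the congruence forces $r \geq q$, which together with $p \not\equiv 0 \pmod q$ forces $p > q$, so $q_1 = q$ and $s_1 = [q]_r = q$. The key reduction is that $r$ matters only modulo $2q$: when $r > 2q$, the first two steps carry $(p,q,r,q)$ through $(r,[2q]_r,q,\ast) = (r,2q,q,\ast)$ and then, by the first branch, to $(2q,[r]_{2q},q,\pm[p-r]_q)$, so that $q_3 = [r]_{2q} \in \{1,q,2q-1\}$. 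It then remains to trace the resulting patterns (together with the direct case $r = 2q-1 \leq 2q$): each terminates at a quadruple of the form $(q,\pm[p]_q,1,0)$ or $(q,0,q,\ast)$, which by the terminal rules evaluates to $\gcd(q,[p]_q) = \gcd(p,q)$, the hypothesis $p \not\equiv 0 \pmod q$ ensuring the termination is unambiguous.

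The main obstacle is the modular bookkeeping in the first formula: one must verify that each reduction stays in the intended branch and that the $s$-coordinate faithfully tracks $p \bmod q$ throughout, so that the final gcd is $\gcd(p,q)$ rather than being contaminated by the intermediate value $2q$ introduced by the reduction. Care is also needed to confirm that, among the three classes, only $r = 2q-1$ lands in the interval $(q,2q)$ (where $[2q]_r = 2q - r$ and a slightly different chain of second-branch steps occurs), while all larger representatives are handled uniformly by the single reduction $r \mapsto [r]_{2q}$. Once these branch selections are pinned down, each trajectory terminates at a torus-link quadruple with exactly $\gcd(p,q)$ components.
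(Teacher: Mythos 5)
Your proposal is correct and follows essentially the same route as the paper: both run the algorithm of Theorem~\ref{thm:main} and read off the terminal quadruple, your second-bullet computation matches the paper's two-step sequence exactly, and the terminal forms $(q,\pm[p]_q,1,0)$ and $(q,0,q,[r-p]_q)$ you identify for the first bullet agree with the paper's explicitly written sequences. The only difference is that the paper traces all four intermediate sequences in full while you defer that mechanical bookkeeping; to your credit, you explicitly flag the $r=2q-1<2q$ subcase, which the paper's displayed sequence for $r\equiv 2q-1\pmod{2q}$ silently skips by assuming $[2q]_r=2q$.
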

\begin{proof} 
    First, suppose $r \equiv 1 \pmod{2q}$. If $r=1$, then clearly $NC(p,q,1,q)=\gcd(p,q)$. Assume $r>2q$. Then the sequence $\{(p_i,q_i,r_i,s_i)\}$ generated by Theorem~\ref{thm:main} is:
    \[
        (p,q,r,q), (r,2q,q,[r-p]_q), (2q,[r]_{2q}, q, [p-r]_q)=(2q,1,q,[p-r]_q), (q, [1+p-r]_q, 1, 0).
    \]
    Hence $NC(p,q,r,q)=\gcd(q,[1+p-r]_q)=\gcd(p,q)$, using $r\equiv 1\pmod{2q}$.

    Now suppose $ r \equiv q \pmod{2q}$. If $r = q$, then $(p_1, q_1, r_1, s_1) = (p, q, q, 0)$ and $NC(p,q;r,q) = \gcd(p,q) $. For $r > q$, the sequence becomes:
    \[
        (p, q, r, q),\ (r, 2q, q, [r - p]_q),\ (2q, q, q, [p - r]_q),\ (q, 0, q, [r - p]_q),
    \]
    which gives
    \[
        NC(p,q,r,q) = \gcd(q, [r - p]_q) = \gcd(p,q),
    \]
    since $r \equiv q \pmod{2q}$.

    For $r \equiv 2q - 1 \pmod{2q}$, we have the sequence:
    \[
        (p, q, r, q),\ (r, 2q, q, [r - p]_q),\ (2q, 2q - 1, q, [p - r]_q),\ (2q - 1, 1, q, [r - p]_q),\ (q, [1 + r - p]_q, 1, 0),
    \]
    and so
    \[
        NC(p,q,r,q) = \gcd(q, [1 + r - p]_q) = \gcd(p,q).
    \]

    Finally, consider the case $s = -q$ and $r > q$. Then the sequence becomes:
    \[
        (p, q, r, [-q]_r),\ (r, 0, q, [r - p]_q),
    \]
    which yields
    \[
        NC(p,q,r,-q) = r - q + \gcd(q, [r - p]_q).
    \]
\end{proof}

We now consider twisted torus links $T(p,q;r,\pm q)$ under specific conditions on the parameter $p$. Since these results follow directly from Theorem~\ref{thm:main}, we omit the proofs and leave them to the reader.
\begin{theorem}
    For the twisted torus link $T(p,q;r,\pm q)$ with $[p]_q=1$ and $q>0$, the number of components is given as follows:
    \begin{itemize}
    \item If $1\leq [r]_{2q}\leq q$, then 
        \[
            NC(p,q;r,q)= \gcd([r]_{2q},1-q).
        \]
    \item If  $q+1\leq [r]_{2q}\leq 2q-1$, then 
        \[
            NC(p,q;r,q)= \gcd([r]_{2q}+2,q+1).
        \]
    \item If $1< r \leq q$, then 
        \[
            NC(p,q;r,-q)= \gcd(r,q+1).
        \]
    \end{itemize}
\end{theorem}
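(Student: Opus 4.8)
The plan is to prove the theorem by a direct application of the recursive algorithm of Theorem~\ref{thm:main}, tracking the quadruple sequence $\{(p_i,q_i,r_i,s_i)\}$ explicitly; since the authors remark that the result ``follows directly'' from that theorem, this is exactly the intended route. The hypothesis $[p]_q=1$ is what makes the bookkeeping tractable: it forces $q\geq 2$ and $p>q$, so that $q_1=[q]_p=q$, and it lets us replace $[p]_q$ by $1$ and any residue $[r-p]_q$ by $[r-1]_q$ throughout the recursion. First I would record these reductions as a preliminary observation, since they are invoked at essentially every step.

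For the $s=q$ cases I would branch on the size of $r$. When $r\leq q$ the sequence is short: the $q_i\geq r_i$ branch gives $(q,1,r,[-q]_r)$, and then the $q_i<r_i$ branch terminates at $(r,[1-q]_r,1,0)$, so $NC=\gcd(r,1-q)=\gcd([r]_{2q},1-q)$, which is bullet~1 since $[r]_{2q}=r$ here. When $r>q$, the first $q_i<r_i$ step produces $(r,[2q]_r,q,[r-1]_q)$; the decisive move is then to reduce modulo $2q$, reaching the pivotal quadruple $(2q,[r]_{2q},q,[1-r]_q)$ when $r>2q$ (one further $q_i\geq r_i$ step), while the range $q<r<2q$ funnels into a common later state $(q,q-1,2q-[r]_{2q},[-q]_{2q-[r]_{2q}})$. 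The analysis from the pivotal quadruple depends only on $[r]_{2q}$, and splitting into $[r]_{2q}\leq q$ and $[r]_{2q}\geq q+1$ yields bullets~1 and~2 respectively. The arithmetic identities to verify along the way are $[[1-r]_q+[r]_{2q}]_q=1$ (using $[r]_{2q}\equiv r\pmod q$), the evaluation $[2q]_{[r]_{2q}}=2q-[r]_{2q}$ for $q<[r]_{2q}<2q$, and finally the rewriting $\gcd(2q-[r]_{2q},q+1)=\gcd([r]_{2q}+2,q+1)$, which follows from $2q\equiv -2\pmod{q+1}$.

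The $s=-q$ case with $1<r\leq q$ is the shortest: since $q_1=q\geq r$, the $q_i\geq r_i$ branch gives $(q,1,r,[q]_r)$, and then the $q_i<r_i$ branch terminates at $(r,[q+1]_r,1,0)$, so Theorem~\ref{thm:main} yields $NC=\gcd(r,q+1)$ at once. I expect the main obstacle to be neither the topology nor any single identity but the careful management of the case analysis: the branch taken at each step is governed by the comparison of $q_i$ with $r_i$, and one must separately confirm the boundary situations $q_i=r_i$, $q_i=1$, and the extreme residues $[r]_{2q}\in\{1,q\}$, checking in each that the claimed formula still holds (for instance $[r]_{2q}=q$ forces termination with $NC=1$, consistent with $\gcd(q,1-q)=1$). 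Confirming that arbitrarily large $r$ collapses in a single $q_i\geq r_i$ step to the $[r]_{2q}$ regime, rather than requiring an inductive descent, is the one structural point that needs care.
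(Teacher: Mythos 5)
Your proposal is correct and is exactly the route the paper intends: the authors omit the proof, stating it ``follows directly'' from Theorem~\ref{thm:main}, and their proof of the analogous Theorem~\ref{theo4} is precisely this kind of explicit tracking of the quadruple sequence. I verified your intermediate quadruples, the reduction to the common state $(q,q-1,2q-[r]_{2q},[-q]_{2q-[r]_{2q}})$, the identity $\gcd(2q-[r]_{2q},q+1)=\gcd([r]_{2q}+2,q+1)$, and the early-termination edge cases (e.g.\ $r\mid q$, $r\equiv 1\pmod q$, $[r]_{2q}\in\{1,q\}$), and all agree with the stated formulas.
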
        

\begin{theorem}          
    For the twisted torus link $T(p,q;r,\pm q)$ with $[p]_q= -1$ and $q>0$, the number of components is given as follows:
    \begin{itemize}
    \item If $1\leq [r]_{2q}\leq q$, then 
    \[
        NC(p,q;r,q)= \gcd([r]_{2q},q+1).
    \]      
    \item If $q+1\leq [r]_{2q}\leq 2q-1$, then
    \[
        NC(p,q;r,q)= \gcd([r]_{2q}-2,q-1).
    \]      
    \item If $r<q$, then 
    \[
        NC(p,q;r,-q)=\gcd(r,1-q).
    \]
    \end{itemize}
\end{theorem}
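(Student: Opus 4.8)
The plan is to run the algorithm of Theorem~\ref{thm:main} and trace the sequence of quadruples $\{(p_i,q_i,r_i,s_i)\}$, exactly as in the proof of Theorem~\ref{theo4}, specializing the recursion to $s=\pm q$ and using the hypothesis $[p]_q=-1$ throughout. Writing $p=kq-1$, the early quadruples involve only the residues $[p]_q=q-1$ and $[-p]_q=1$, together with $[q]_r$ and $[\pm q]_r$, so once the shape of the trace is pinned down the whole computation is pure modular bookkeeping.

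The central observation that organizes all three cases is that, for $s=q$ and $r>q$, the first two steps produce $(p_2,q_2,r_2,s_2)=(r,[2q]_r,q,[r-p]_q)$, and when $r>2q$ the next step is $(p_3,q_3,r_3,s_3)=(2q,[r]_{2q},q,[p-r]_q)$. Thus the residue $[r]_{2q}$ is produced in the single modular reduction $[p_2]_{q_2}=[r]_{2q}$, independently of how large $r$ is. I would first prove that after these steps the trace reaches a quadruple depending on $r$ only through $[r]_{2q}$ (using $[r]_{2q}\equiv r\pmod q$ to collapse the $q$-residues), so that one may assume $0<r<2q$; the regimes $1\le[r]_{2q}\le q$ and $q+1\le[r]_{2q}\le 2q-1$ then correspond precisely to $r\le q$ and $q<r<2q$, which explains the two subcases for $s=q$. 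The case $s=-q$ with $r<q$ needs no such reduction and is the shortest trace.

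With the reduction in hand, I would carry the three short traces to their terminal quadruple and read off the answer via Theorem~\ref{thm:main}. For $s=-q,\ r<q$ the trace runs $(p,q,r,[-q]_r)\to(q,q-1,r,[q]_r)\to(q-1,1,r,[-q]_r)\to(r,[1-q]_r,1,0)$, terminating with $\gcd(r,[1-q]_r)=\gcd(r,1-q)$. For $s=q,\ r\le q$ the analogous trace ends at $(r,[q+1]_r,1,0)$, giving $\gcd(r,q+1)=\gcd([r]_{2q},q+1)$. For $s=q,\ q<r<2q$ one reaches $(2q-r,[q-r+1]_{2q-r},1,0)$ and then simplifies $\gcd(2q-r,q-r+1)=\gcd(q-1,q-r+1)=\gcd(q-1,r-2)$, matching $\gcd([r]_{2q}-2,q-1)$. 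Each simplification is an elementary gcd identity, obtained by subtracting one entry from the other and reducing modulo $q-1$ or $q+1$.

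The main obstacle is not any single step but the bookkeeping of the alternating sign on $s$ across the recursion (the $[-s_i]_{r_i}$ in the first branch versus $[s_i+q_i]_{r_i}$ in the second) together with the boundary and degenerate configurations, which must be checked separately because they route through a different terminal rule or a truncated trace. Concretely I expect to handle: $q=1$ (degenerate); $r=1$ (the trace terminates one step early); the boundary value $[r]_{2q}=q$ (where $q_i$ hits $0$, forcing the $q_n=0$ rule $p_n-r_n+\gcd(r_n,s_n)$ rather than the $s_n=0$ rule); the value $[r]_{2q}=2q-1$ (requiring the check $\gcd(2q-3,q-1)=1$); and the smallest admissible $p=q-1$, where already $q_1=[q]_{q-1}=1$ and the generic trace shape degenerates. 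Verifying that the stated formulas remain correct on all of these boundaries is the most delicate part of the argument.
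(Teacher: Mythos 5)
Your proposal is correct and follows exactly the approach the paper intends: the paper omits this proof, stating only that it ``follows directly from Theorem~\ref{thm:main},'' and your traces of the quadruple sequence (including the merge of the $r>2q$ and $q<r<2q$ branches at $(2q,[r]_{2q},q,[p-r]_q)$, the terminal quadruples, and the gcd simplifications) are the omitted computation, carried out in the same style as the paper's proof of Theorem~\ref{theo4}. I verified the three generic traces and the boundary configurations you flag ($r=1$, $[r]_{2q}=q$ routing through the $q_n=0$ rule, $[r]_{2q}=2q-1$, and $p=q-1$), and all yield the stated formulas.
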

    
\begin{theorem}       
    For the twisted torus link $T(p,q;r,\pm q)$ with $q>0$, the number of components is given as follows:
    \begin{itemize}
    \item If $[r]_{2q}=k$ and $[p]_q=0$, then
    \[
        NC(p,q;r,q)= |q-k|+\gcd(k,q) \quad\text{and}\quad NC(p,q;r,-q)= |r-q|+\gcd(k,q).
    \]
    \item If $[r]_{2q}=0$ and $[p]_q\neq 0$, then 
    \[
        NC(p,q,r,q) = q+\gcd(q,p)\quad\text{and}\quad NC(p,q,r,-q) = r-q+\gcd(q,p).
    \]          
    \item If $[p]_q=[r]_{2q}$, then
    \[
        NC(p,q;r,q)=\gcd(r,2q)\quad\text{and}\quad NC(p,q;r,-q)=r.
    \]
    \end{itemize}
\end{theorem}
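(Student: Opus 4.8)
The plan is to obtain every formula as a direct unwinding of the Euclidean-like algorithm of Theorem~\ref{thm:main}, following the template set by the proof of Theorem~\ref{theo4}. For each of the three parameter regimes and each sign of $s=\pm q$, I would write down the induced sequence of quadruples $(p_i,q_i,r_i,s_i)$ and run it until it reaches a terminal quadruple (with $q_n=0$ or $s_n=0$), at which point the closed form in Theorem~\ref{thm:main} --- equivalently Lemma~\ref{lem:final_step} --- reads off the answer. Throughout I abbreviate $a=[p]_q$ and $k=[r]_{2q}$, and I assume $p>q$; the degenerate case $p=q$ forces $[q]_p=0$ and is handled separately and trivially. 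With this convention the first quadruple is always $(p_1,q_1,r_1,s_1)=(p,q,r,[\pm q]_r)$, and the split between the $s=q$ and $s=-q$ formulas originates in the single difference $[q]_r$ versus $[-q]_r$ recorded in $s_1$.

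The main structural steps I would isolate are these. When $q<r$ the swap branch of the recursion fires and immediately sets $r_{i+1}=q_i\le q$, so the twist width collapses to at most $q$ after one step; when $q\ge r$ the first branch fires and the pair $(p_i,q_i)$ simply executes the ordinary Euclidean algorithm on $(p,q)$ while $r$ is held fixed and the sign of $s$ alternates. I would therefore organize each case by whether $r<q$ or $r\ge q$. In every sub-regime the hypotheses $[p]_q=a$ and $[r]_{2q}=k$ determine the residues $[p_i]_{q_i}$, $[-p_i]_{q_i}$, $[s_i+q_i]_{r_i}$, and $[r_i-p_i]_{q_i}$ that appear in the recursion, so the trajectory --- and hence the terminal quadruple --- is pinned down up to the sizes of $p$ and $r$. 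Concretely, for $[p]_q=0$ (Case~1) I expect the tail to terminate at a quadruple of the form $(q',0,r',s')$ whose value $q'-r'+\gcd(r',s')$ equals $|q-k|+\gcd(k,q)$ for $s=q$ and $|r-q|+\gcd(k,q)$ for $s=-q$; for $[r]_{2q}=0$ (Case~2) the identities $[q]_r=q$ and $[-q]_r=r-q$ are exactly what produces the $q$ versus $r-q$ discrepancy, with the $(p_i,q_i)$-part contributing the $\gcd(p,q)$ term; and for $a=k$ (Case~3) the equality of residues makes $[r_i-p_i]_{q_i}$ vanish at the decisive step, terminating the sequence at a quadruple yielding $\gcd(r,2q)$ when $s=q$ and $r$ when $s=-q$.

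The hard part will be that $r$ is constrained only modulo $2q$ (and $p$ only modulo $q$), while the constraint $p\ge r$ permits $r$ --- and hence $p$ --- to be arbitrarily large. When $r>2q$ the algorithm may perform several additional Euclidean-type reductions before the twist width stabilizes, so I must show that these extra steps all funnel into the same residue-determined tail; that is, that $NC(p,q;r,\pm q)$ genuinely depends on $r$ only through $k=[r]_{2q}$ and on $p$ only through $a=[p]_q$. I plan to establish this by verifying that each application of the recursion transforms the relevant residues $a$ and $k$ predictably (indeed preserves the data needed to determine the tail), and then inducting on $p+r$ to reduce to a bounded base case in which $r\le 2q$ and $p$ is at most a fixed multiple of $q$; in that base case the remaining sequence has bounded length and can be written out explicitly and evaluated via Lemma~\ref{lem:final_step}. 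Confirming that all large-parameter trajectories converge to this common tail, rather than the purely mechanical unwinding of any single trajectory, is where the real content of the proof lies.
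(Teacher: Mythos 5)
Your proposal matches the paper's intended argument: the paper explicitly omits the proof of this theorem, stating that it follows directly from Theorem~\ref{thm:main} by the same case-by-case unwinding of the quadruple sequence $(p_i,q_i,r_i,s_i)$ used to prove Theorem~\ref{theo4}, which is exactly what you describe. The only remark worth making is that the induction on $p+r$ you anticipate needing is unnecessary --- the residue operations $[\,\cdot\,]_m$ built into the recursion collapse a large $r$ to $[r]_{2q}$ (and a large $p$ to $[p]_q$) within the first two or three steps, so each case reduces to a short, explicitly computable tail after a bounded case split on the relative sizes of $r$, $q$, and $2q$.
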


\section{$T$-links with three pairs of parameters}\label{sec:$T$-links}
Twisted torus links can be compared with $T$-links, a family of links introduced by Birman and Kofman \cite{Birman_09}. Given positive integers $p_1>p_2>\dots>p_n\geq 2$ and $q_i>0$ for $i=1,\dots,n$, a $T$-link $T(p_1,q_1;p_2,q_2;\dots;p_n,q_n)$ is defined as the closure of the braid 
\[
    (\sigma_1\sigma_2\cdots\sigma_{p_1-1})^{q_1}(\sigma_1\sigma_2\cdots\sigma_{p_2-1})^{q_2}\dots(\sigma_1\sigma_2\cdots\sigma_{p_n-1})^{q_n}.
\]
on $p_1$ strands. This naturally leads to the following fundamental question:
\begin{question}
    What is the number of components of a $T$-link?
\end{question}
In particular, the subclass of $T$-links with $n=2$ is contained within the broader class of twisted torus links. Our results provide an answer for this case. We now introduce an algorithm that enables the computation of the number of components of a $T$-link when $n=3$, which serves as a natural generalization of the $n=2$ case. 

From this point forward, we generalize the notion of a $T$-link by removing the restriction on the ordering of the $p_i$ and allowing arbitrary integers $q_i$. Specifically, for integers $p_i>0$ and $q_i\in\mathbb{Z}$ for $i=1,2,3$, we define $T(p_1,q_1;p_2,q_2;p_3,q_3)$ as the closure of the braid
\[
    (\sigma_1\sigma_2\dots\sigma_{p_1-1})^{q_1}(\sigma_1\dots\sigma_{p_2-1})^{q_2}(\sigma_1\dots\sigma_{p_3-1})^{q_3}.
\]
on $\max\{p_1,p_2,p_3\}$ strands. The number of components of the link is denoted by $NC(p_1,q_1;p_2,q_2;p_3,q_3)$. We say that $T(p_1,q_1;p_2,q_2;p_3,q_3)$ is in \emph{standard form} if $p_1\geq p_2\geq p_3$ and $p_i> q_i\geq 0$ for each $i=1,2,3$.

The following is analogous to Lemma~\ref{lem:full_twists}. 
\begin{lemma}\label{lem:full_twists-2}
    $NC(p_1,q_1;p_2,q_2;p_3,q_3)=NC(p_1,[q_1]_{p_1};p_2,[q_2]_{p_2};p_3,[q_3]_{p_3})$
\end{lemma}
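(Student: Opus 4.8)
The plan is to imitate the proof of Lemma~\ref{lem:full_twists}, now phrased at the level of the underlying permutation of the closed braid. Recall that the number of components of the closure of a braid $\beta$ on $N$ strands equals the number of cycles in the disjoint cycle decomposition of its image under the canonical projection $B_N\to S_N$ that sends each $\sigma_j$ to the transposition $(j\ j{+}1)$. Thus it suffices to show that reducing each $q_i$ modulo $p_i$ leaves this permutation unchanged.

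First I would observe that the factor $(\sigma_1\sigma_2\cdots\sigma_{p_i-1})^{q_i}$ projects to the $q_i$-th power of the $p_i$-cycle $(1\ 2\ \cdots\ p_i)$, acting as the identity on the remaining strands $p_i+1,\dots,N$ (where $N=\max\{p_1,p_2,p_3\}$). Since this cycle has order $p_i$, its $q_i$-th power depends only on $[q_i]_{p_i}$; equivalently, $(\sigma_1\cdots\sigma_{p_i-1})^{p_i}$ is a full twist on the first $p_i$ strands and hence a pure braid, lying in the kernel of the projection to $S_N$. Consequently, deleting or inserting such full twists does not affect the permutation contributed by the $i$-th factor.

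Combining the three factors, the permutation realized by the whole braid is the product of the three block permutations, and by the previous paragraph this product depends only on $[q_1]_{p_1}$, $[q_2]_{p_2}$, and $[q_3]_{p_3}$. Therefore its cycle structure---and in particular its number of cycles---is invariant under these reductions, which yields the claimed equality of component counts.

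I expect no real obstacle here: the statement is a direct generalization of Lemma~\ref{lem:full_twists}, and the only point requiring (minor) care is that a full twist performed on a proper subset of consecutive strands still contributes the identity permutation on all $N$ strands, so that the differing widths $p_1,p_2,p_3$ cause no interference.
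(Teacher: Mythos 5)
Your proof is correct and rests on the same observation as the paper's one-line proof: a full twist on a block of $p_i$ strands is a pure braid, so inserting or deleting it does not change the permutation underlying the closed braid and hence does not change the number of components. You simply make explicit, via the projection $B_N\to S_N$, what the paper asserts directly.
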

\begin{proof}
    Adding or subtracting full twists on $p_i$ strands does not change the number of components.  
\end{proof}

The following result shows that any generalized $T$-links with $n=3$ can be transformed into a $T$-link in standard form without altering the number of components. 
\begin{lemma}\label{lem:standard_form}
    For any $T(p_1,q_1;p_2,q_2;p_3,q_3)$, there exists a $T$-link $T(p_1',q_1';p_2',q_2';p_3',q_3')$ in standard form such that $(p_1',p_2',p_3')$ is a permutation of $(p_1,p_2,p_3)$, and the two links have the same number of components.
\end{lemma}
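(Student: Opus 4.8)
The plan is to prove Lemma~\ref{lem:standard_form} by reducing the general case to a sequence of elementary moves, each of which either permutes two adjacent parameter pairs or normalizes the exponents, all while preserving the number of components. First I would invoke Lemma~\ref{lem:full_twists-2} to replace each $q_i$ by its residue $[q_i]_{p_i}$, so that $p_i > q_i \geq 0$ holds for every $i$; this handles the second condition of standard form at the outset and may be reapplied after each permutation move. The remaining task is to arrange $p_1 \geq p_2 \geq p_3$ by sorting, which I would accomplish via adjacent transpositions, so the crux is a single \emph{swap lemma}: whenever two consecutive factors $(\sigma_1\cdots\sigma_{p_i-1})^{q_i}(\sigma_1\cdots\sigma_{p_{i+1}-1})^{q_{i+1}}$ appear with $p_i < p_{i+1}$, I can exchange them (suitably modifying the exponents) without changing the component count.

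The key step is establishing this swap. Here I would mimic the geometric argument of Lemma~\ref{lem:swapping_p_and_q}: place the relevant portion of the braid on a thickened flat torus and rotate, or equivalently observe that the closure of $(\sigma_1\cdots\sigma_{a-1})^{b}(\sigma_1\cdots\sigma_{c-1})^{d}$ with $c > a$ can be reread, after a $180^\circ$ rotation interchanging the roles of the ``long'' and ``short'' blocks, as a $T$-link with the larger block first. The bookkeeping is that swapping a $(p_i,q_i)$ block past a strictly larger $(p_{i+1},q_{i+1})$ block converts the torus braid on the larger number of strands into one on fewer strands, pushing the excess strands around the closure exactly as in the two-parameter case; one then uses Lemma~\ref{lem:shift} and, if orientations of the crossings flip, the mirror identity from Lemma~\ref{lem:mirror} to restore the standard positive form of each factor. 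Since a bubble sort on three elements requires at most three adjacent swaps, finitely many applications of the swap lemma combined with re-normalization via Lemma~\ref{lem:full_twists-2} brings any link into standard form, and because every move preserves $NC$, the resulting $(p_1',p_2',p_3')$ is a permutation of the original triple with the same number of components.

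The main obstacle I anticipate is the swap lemma itself, specifically tracking how the exponents transform and ensuring the move is genuinely valid when the two blocks have comparable (rather than nested) sizes. In the two-parameter setting of Lemma~\ref{lem:swapping_p_and_q} one block was literally a torus braid containing the other, but here the second block may involve strands that the torus-rotation argument does not cleanly accommodate, so I would need to carefully justify that the sub-braid on the smaller strand count can always be repositioned after the rotation—possibly splitting into subcases according to whether $p_{i+1} \geq q_i$ or not, paralleling the case division between Lemmas~\ref{lem:swapping_p_and_q} and~\ref{lem:forming_r_strands}. Once the swap is pinned down with its precise exponent formula, the inductive sorting argument is routine, and I would present the full standard-form reduction as an immediate corollary.
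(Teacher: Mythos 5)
Your reduction to a single ``adjacent swap lemma'' is where the argument breaks down, and the swap you need is not established by the tools you cite. Lemma~\ref{lem:swapping_p_and_q} does not exchange the order of two parameter pairs in the braid word: it replaces the torus braid $(\sigma_1\cdots\sigma_{p-1})^q$ by its dual $(\sigma_1\cdots\sigma_{q-1})^p$ via a global $180^\circ$ rotation of the entire closed braid on the flat torus, and the remaining twist region is carried along and repositioned only because it fits inside the new braid (this is where the hypothesis $q\ge r$ enters). It is not a local commutation move, so it cannot be applied to two adjacent blocks sitting inside a longer word while leaving the third block untouched; the rotation acts on everything at once. Moreover, if an adjacent-transposition move of the kind you describe existed — reordering two neighbouring blocks at the cost of only adjusting their own exponents — it would realize all of $S_n$ for every $n$, and the standard-form reduction would generalize to $T$-links with arbitrarily many parameter pairs; the paper explicitly remarks that this is precisely the step that fails for $n\ge 4$. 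That is a strong signal that your key lemma is either false or at least not provable by the methods you propose.

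The intended proof avoids transpositions altogether. The closure of a braid is invariant under conjugation, so the three sub-braids may be cyclically permuted; and replacing the braid word by its inverse (which reverses the order of the blocks and negates the exponents) yields the mirror image, which has the same number of components. Cyclic rotations together with this one reversal generate the dihedral group on three beads, which for $n=3$ coincides with $S_3$, so the $p_i$ can always be sorted into descending order; one then applies Lemma~\ref{lem:full_twists-2} to arrange $p_i'>q_i'\ge 0$. Your use of Lemma~\ref{lem:full_twists-2} for the exponent normalization is correct, but the sorting mechanism needs to be replaced by this cyclic-permutation-plus-mirror argument.
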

\begin{proof}
    By applying an appropriate cyclic permutation of the sub-braids $T(p_i,q_i)$, we can reorder the triple in either ascending or descending order of the $p_i$. If the result order is ascending, we take the inverse of the braid word, which corresponds to reversing the order of the sub-braids and inverting each crossing, thereby obtaining $p_1'\geq p_2'\geq p_3'$. This transformation does not change the number of link components. The process is analogous to the fact that there is only one combinatorial type of necklace that can be formed from three distinguishable beads, up to rotation and reflection. Finally, apply Lemma~\ref{lem:full_twists-2} to ensure that $p_i'>q_i'\geq0$.
\end{proof}

The following lemmas are analogous to Lemma~\ref{lem:final_step}, ~\ref{lem:swapping_p_and_q} and \ref{lem:forming_r_strands}, and their proofs follow by similar arguments.
\begin{lemma}\label{lem:final_step-2}
    For $T$-link $T(p_1,q_1;p_2,q_2;p_3,q_3)$ in standard form,
    \[
        NC(p_1,q_1;p_2,q_2;p_3,q_3) = \begin{cases}
            p_1-p_2+NC(p_2,q_2;p_3,q_3)&\text{if}\quad q_1=0\\
            NC(p_1,q_1;p_3,q_3)&\text{if}\quad q_2=0\\
            NC(p_1,q_1;p_2,q_2)&\text{if}\quad q_3=0\\
        \end{cases}
    \]
\end{lemma}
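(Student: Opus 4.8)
The plan is to prove each of the three cases of Lemma~\ref{lem:final_step-2} by reducing a $T$-link with three pairs of parameters to a $T$-link with two pairs (or to a torus link plus unknots), exactly as Lemma~\ref{lem:final_step} handled the two-parameter case. The key observation is that when one of the exponents $q_j$ vanishes, the corresponding sub-braid $(\sigma_1\cdots\sigma_{p_j-1})^{q_j}$ is trivial and can simply be deleted from the braid word, leaving a braid with only two nontrivial torus sub-braids. I would treat the three cases in the order $q_3=0$, $q_2=0$, $q_1=0$, since the first two are essentially immediate and the third requires a small extra step.

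First I would dispose of the case $q_3=0$: the last factor $(\sigma_1\cdots\sigma_{p_3-1})^{0}$ is the identity braid, so the braid word reduces to $(\sigma_1\cdots\sigma_{p_1-1})^{q_1}(\sigma_1\cdots\sigma_{p_2-1})^{q_2}$, whose closure is by definition $T(p_1,q_1;p_2,q_2)$; hence the two links are literally equal and $NC(p_1,q_1;p_2,q_2;p_3,q_3)=NC(p_1,q_1;p_2,q_2)$. Next, for $q_2=0$ the middle factor is trivial and deleting it gives the braid word $(\sigma_1\cdots\sigma_{p_1-1})^{q_1}(\sigma_1\cdots\sigma_{p_3-1})^{q_3}$, whose closure is $T(p_1,q_1;p_3,q_3)$, giving the second identity by the same reasoning. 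These two cases need no isotopy argument beyond recognizing a trivial sub-braid, and I would state them briefly.

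The genuinely substantive case is $q_1=0$. Here the first factor, which acts on the full set of $p_1$ strands, is trivial, so the braid word collapses to $(\sigma_1\cdots\sigma_{p_2-1})^{q_2}(\sigma_1\cdots\sigma_{p_3-1})^{q_3}$. Since $p_1\geq p_2\geq p_3$ in standard form, this braid word involves only the first $p_2$ strands nontrivially, while the remaining $p_1-p_2$ strands carry no crossings at all. The plan is to argue that each of those $p_1-p_2$ top strands closes up into its own unknotted, unlinked component, contributing exactly $p_1-p_2$ to the component count, while the closure of the nontrivial part on the first $p_2$ strands is precisely $T(p_2,q_2;p_3,q_3)$. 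This yields $NC(p_1,q_1;p_2,q_2;p_3,q_3)=(p_1-p_2)+NC(p_2,q_2;p_3,q_3)$, mirroring exactly the $(p-r)+\gcd(r,s)$ computation in the proof of Lemma~\ref{lem:final_step}.

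The main obstacle, such as it is, lies in this last case: I must verify carefully that the $p_1-p_2$ idle strands are genuinely split off as separate unknotted components rather than being linked with the remainder. This is visually clear because no generator $\sigma_i$ with $i\geq p_2$ appears anywhere in the reduced braid word, so strands $p_2+1,\dots,p_1$ never interact with the others or with each other; upon taking the braid closure each becomes an isolated round unknot. I would phrase this as an explicit split-link decomposition of the closure into $T(p_2,q_2;p_3,q_3)$ together with $p_1-p_2$ disjoint unknots, after which the count is immediate. Everything else in the proof is a direct transcription of the arguments already given for Lemma~\ref{lem:final_step}, so I would simply note that the remaining verifications follow by the same reasoning.
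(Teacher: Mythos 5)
Your proposal is correct and follows the same route as the paper, which simply observes that the result is immediate from the definition of the $T$-link once the trivial sub-braid is deleted; your expanded treatment of the $q_1=0$ case (the $p_1-p_2$ idle strands closing into split unknots) is exactly the intended reading. No issues.
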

\begin{proof}
    The result follows directly from the definition of the $T$-link.
\end{proof}

\begin{lemma}\label{lem:swapping_p_and_q-2}
    For  $T$-link $T(p_1,q_1;p_2,q_2;p_3,q_3)$ in standard form, if $q_1\geq p_2$, then
    \[
        NC(p_1,q_1;p_2,q_2;p_3,q_3)=NC(q_1,p_1;p_2,-q_2;p_3,-q_3)
    \]
\end{lemma}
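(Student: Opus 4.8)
\emph{Proof proposal.} The plan is to mirror the proof of Lemma~\ref{lem:swapping_p_and_q}, now treating the product of the last two sub-braids $B=(\sigma_1\cdots\sigma_{p_2-1})^{q_2}(\sigma_1\cdots\sigma_{p_3-1})^{q_3}$ as a single sub-braid. Since the link is in standard form we have $p_1\geq p_2\geq p_3$, so $B$ is supported on the first $p_2$ strands; the hypothesis $q_1\geq p_2$ is the exact analogue of the condition $q\geq r$ used in Lemma~\ref{lem:swapping_p_and_q}, and it is what guarantees that the flipped copy of $B$ (living on $p_2$ strands) fits inside the $q_1$-strand braid obtained after the swap.

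The steps, in order, would be as follows. First I place $T(p_1,q_1;p_2,q_2;p_3,q_3)$ on a thickened flat torus with $B$ realized as a local sub-braid, and apply the $180$-degree rotation about a diagonal, transforming the $(p_1,q_1)$-torus braid into the $(q_1,p_1)$-torus braid while carrying $B$ to its flipped image $B'$. Because $B'$ is supported on $p_2\leq q_1$ strands, it can be pushed around the closure to sit as a separate factor after the $(q_1,p_1)$-torus braid, exactly as in Lemma~\ref{lem:swapping_p_and_q}. I then invoke a two-region version of the shift move of Lemma~\ref{lem:shift} to reposition the twist factors onto the first strands, and finally invert all crossings of the twist factors. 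This last operation preserves the number of components because it does not alter the underlying permutation of the closed braid, and it converts the positive powers $q_2,q_3$ into the powers $-q_2,-q_3$ appearing in $T(q_1,p_1;p_2,-q_2;p_3,-q_3)$; this is the same bookkeeping as in Lemma~\ref{lem:swapping_p_and_q}, and the resulting (possibly non-standard) $T$-link is then legitimate by the generalized definition, to be re-standardized later via Lemma~\ref{lem:standard_form}.

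The hard part will be the composite nature of $B$. Unlike the single-twist case, the flip reverses the \emph{relative order} of the two sub-braids $(\sigma_1\cdots\sigma_{p_2-1})^{q_2}$ and $(\sigma_1\cdots\sigma_{p_3-1})^{q_3}$, and, since $p_3<p_2$, it sends the shorter factor to a twist region on the \emph{last} $p_3$ strands of the $p_2$-block rather than the first $p_3$ strands. The crux is therefore to verify that neither discrepancy affects the component count: concretely, that after repositioning one may rewrite the two nested twist regions in the standard order and position of $T(q_1,p_1;p_2,-q_2;p_3,-q_3)$ without changing the number of cycles of the associated permutation. I expect to settle this by passing to the permutation model described in the introduction and checking that the block reflection together with the order reversal induces a bijection on the disjoint cycles; this permutation-level reconciliation, rather than the topological moves, is the step I anticipate requiring the most care. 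A small worked example (for instance $T(4,3;3,2;2,1)$, where both orderings yield the same count) suggests the identity is robust, but the general argument must make the order-independence precise.
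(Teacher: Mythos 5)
Your proposal matches the paper's proof, which is itself only a one-line remark: apply the argument of Lemma~\ref{lem:swapping_p_and_q} to the composite sub-braid $B=(\sigma_1\cdots\sigma_{p_2-1})^{q_2}(\sigma_1\cdots\sigma_{p_3-1})^{q_3}$ --- exactly the sub-braid and the sequence of moves (diagonal rotation, push-down using $q_1\geq p_2$, shift, crossing change) you describe. The order-reversal subtlety you flag is real but is not addressed in the paper's proof, so your treatment is, if anything, more careful than the original.
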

\begin{proof}
    The result follows by applying an argument analogous to that used in the proof of Lemma~\ref{lem:swapping_p_and_q}. Consider the sub-braid $B=(\sigma_1\cdots\sigma_{p_{2}-1})^{q_2}(\sigma_1\cdots\sigma_{p_{3}-1})^{q_3}$ instead.
\end{proof}

\begin{lemma}\label{lem:forming_p_2_strands}
    For $T$-link $T(p_1,q_1;p_2,q_2;p_3,q_3)$ in standard form, if $p_2>q_1$, then
    \[
        NC(p_1,q_1;p_2,q_2;p_3,q_3)=NC(q_1,p_2-p_1;p_2,q_1+q_2;p_3,q_3)
    \]
\end{lemma}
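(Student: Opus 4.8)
The plan is to prove this exactly as the three-factor analogue of Lemma~\ref{lem:forming_r_strands}, the only new ingredient being the presence of the third factor $C=(\sigma_1\cdots\sigma_{p_3-1})^{q_3}$. Since the link is in standard form we have $p_1\ge p_2\ge p_3$ and $p_2>q_1$, so $C$ acts only on the first $p_3\le p_2$ strands. Following the strategy indicated for Lemma~\ref{lem:swapping_p_and_q-2}, I would treat $C$ as a spectator riding along on these innermost strands while the first two factors $(\sigma_1\cdots\sigma_{p_1-1})^{q_1}(\sigma_1\cdots\sigma_{p_2-1})^{q_2}$ --- which by themselves represent the twisted torus link $T(p_1,q_1;p_2,q_2)$ --- undergo precisely the transformation of Lemma~\ref{lem:forming_r_strands}.

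Concretely, I would carry out three moves in order. First, apply the strand reduction of \cite[Proposition 3.2]{Lorenzknots} to the leading torus factor, passing from $p_1$ strands to $p_2$ strands; because every subsequent factor, and in particular $C$, is supported on at most $p_2$ strands, this reduction leaves $C$ untouched and yields a braid of the form $(\sigma_1\cdots\sigma_{p_2-1})^{q_1}(\sigma_{p_2-1}\cdots\sigma_{p_2-q_1+1})^{p_1-p_2}(\sigma_1\cdots\sigma_{p_2-1})^{q_2}C$ on $p_2$ strands. Second, reposition the block $(\sigma_1\cdots\sigma_{p_2-1})^{q_2}$ around the braid closure, exactly as in Lemma~\ref{lem:forming_r_strands}, so that it merges with the leading $q_1$ rows to form $(\sigma_1\cdots\sigma_{p_2-1})^{q_1+q_2}$ while the reduction block is reflected onto the first $q_1$ strands as $(\sigma_{q_1-1}\cdots\sigma_1)^{p_1-p_2}$. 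Third, change all crossings of this reflected block, replacing $(\sigma_{q_1-1}\cdots\sigma_1)^{p_1-p_2}$ by $(\sigma_1\cdots\sigma_{q_1-1})^{p_2-p_1}$; this alters the link type but preserves the number of components. Reading off the result and invoking conjugation invariance of the closure (that is, cyclic permutation of the whole braid word, which never changes the number of components) should then produce exactly $T(q_1,p_2-p_1;p_2,q_1+q_2;p_3,q_3)$.

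The main obstacle is the second move. In Lemma~\ref{lem:forming_r_strands} the repositioned block sits at the very bottom of the diagram and slides freely around the closure, whereas here $C$ lies below it, so I must check that $C$ neither blocks the repositioning nor becomes entangled with the relocated strands. The plan is to first slide $C$ around the closure (a cyclic conjugation, which does not change the number of components) so that the repositioning of the $p_2$-block takes place in a region disjoint from $C$, and then to return $C$ to the bottom. The delicate point is bookkeeping the final location of $C$: with three factors one may only permute the braid word cyclically and not transpose two factors independently, so I must verify that $C$ lands between the merged $p_2$-block and the reflected $q_1$-block. Granting this, a single cyclic rotation of the three factors yields the claimed standard expression, completing the argument.
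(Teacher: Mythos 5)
Your high-level plan coincides with the paper's (which itself only argues ``by analogy'' with Lemma~\ref{lem:forming_r_strands} and a figure): reduce to $p_2$ strands via \cite[Proposition 3.2]{Lorenzknots}, merge the two torus-braid factors, then change crossings. You also correctly isolate the one genuinely delicate point. The problem is that your proposed resolution of that point cannot work, and you leave the decisive verification as ``granting this.'' After the reduction the braid is $\delta^{q_1}\,D\,\delta^{q_2}\,C$ on $p_2$ strands, where $\delta=\sigma_1\cdots\sigma_{p_2-1}$ and $D=(\sigma_{p_2-1}\cdots\sigma_{p_2-q_1+1})^{p_1-p_2}$ sits on the last $q_1$ strands. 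Sliding a block ``around the braid closure'' is cyclic permutation of the word, and cyclic permutation preserves the cyclic order of the four blocks $(\delta^{q_1},D,\delta^{q_2},C)$; since the two torus factors are separated by $D$ on one side and by $C$ on the other, no sequence of such moves makes them adjacent. Nor can you simply commute $C$ past $D$: their supports overlap whenever $p_3>p_2-q_1$. And the bookkeeping you defer is not harmless. For $(p_1,q_1;p_2,q_2;p_3,q_3)=(5,2;4,1;3,1)$ the link has $2$ components, matching the target word $\sigma_1^{-1}\delta^{3}(\sigma_1\sigma_2)$; but the word $\delta^{3}(\sigma_1\sigma_2)\sigma_3^{-1}$, built from the same three blocks with the reduction block left on the last $q_1$ strands and placed after $C$, has $4$ components. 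So the final position of the $D$-block genuinely determines the answer, and your argument never establishes it.

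The missing idea is that $C$ should not be moved at all; instead one transports $D$ \emph{upward through} $\delta^{q_1}$. The torus braid $\delta^{q_1}$ carries the first $q_1$ positions at its top to the last $q_1$ positions at its bottom, so the block $D$ can be pushed up along those strands and re-emerges as a block $D''=(\sigma_{q_1-1}\cdots\sigma_1)^{p_1-p_2}$ on the first $q_1$ strands sitting above $\delta^{q_1}$. This yields $D''\,\delta^{q_1+q_2}\,C$ with $C$ untouched, and the crossing change converts $D''$ into $(\sigma_1\cdots\sigma_{q_1-1})^{p_2-p_1}$, giving exactly $T(q_1,p_2-p_1;p_2,q_1+q_2;p_3,q_3)$. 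At the level of permutations (which is all the component count sees) this is the identity $\pi(\delta^{q_2})\circ\pi(D)\circ\pi(\delta^{q_1})=\pi(\delta^{q_1+q_2})\circ\bigl(\pi(\delta^{q_1})^{-1}\circ\pi(D)\circ\pi(\delta^{q_1})\bigr)$, together with the check that the conjugated permutation is supported on the first $q_1$ positions and acts there as $\pi\bigl((\sigma_1\cdots\sigma_{q_1-1})^{p_2-p_1}\bigr)$. This transposition of $D$ past $\delta^{q_1}$ is precisely the non-cyclic move your write-up declares unavailable, so as stated your proof cannot be completed without adding it.
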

\begin{proof}
    This follows by an argument analogous to the one used in the proof of Lemma~\ref{lem:forming_r_strands}. See Figure~\ref{fig:T-link}, for instance, the transformation of $T(8,4;6,3;5,3)$ into $T(4,-2;6,7;5,3)$ as an example. Note that the last transformation is not an isotopy of the links, but it preserves the number of components.
\end{proof} 

\begin{figure}[t!]
    \centering
    \includegraphics[width=0.9\textwidth]{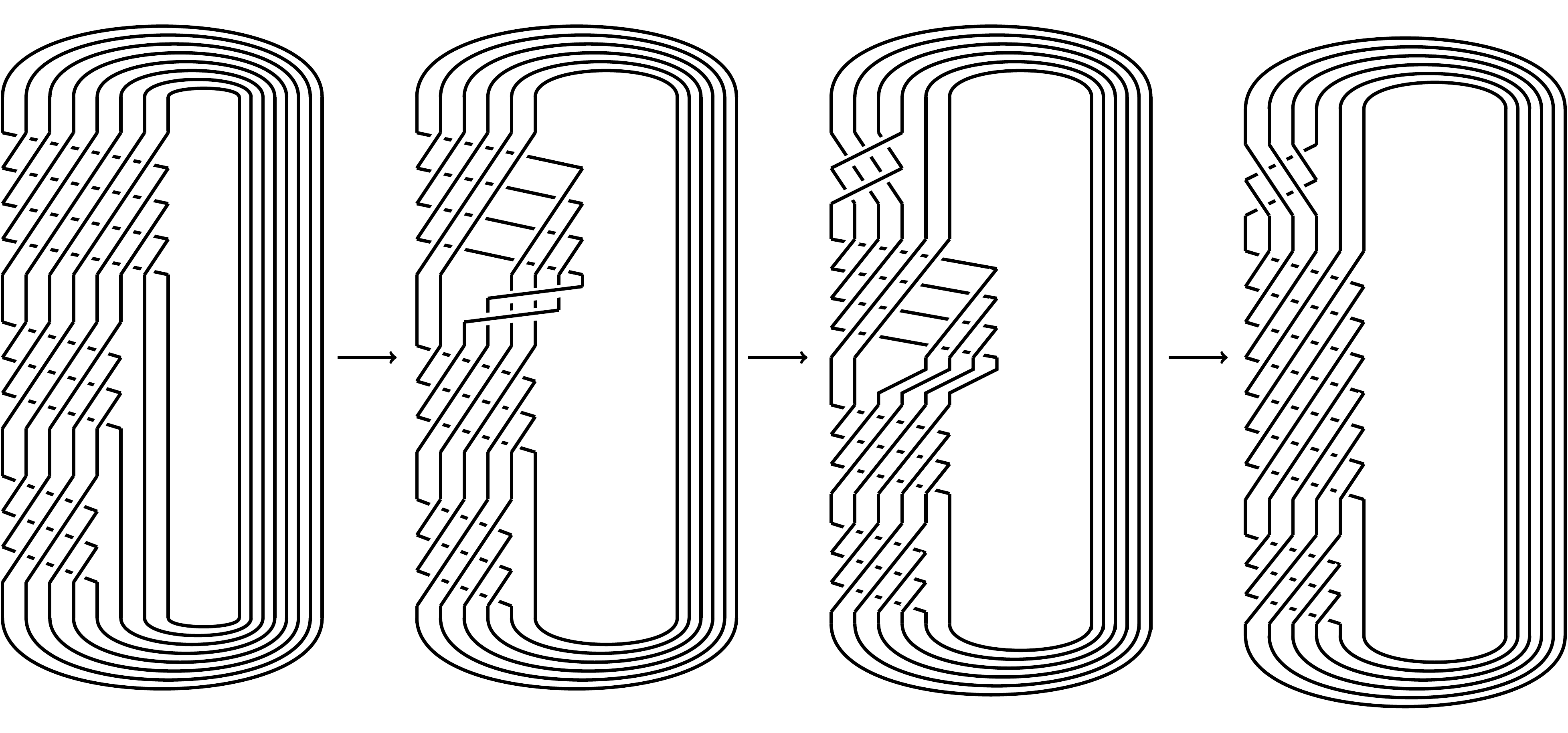}
    \caption{A transformation from $T(8,4;6,3;5,3)$ to $T(4,-2;6,7;5,3)$ that preserves the number of components.}
    \label{fig:T-link}
\end{figure}

We obtain an algorithm to compute the number of components of $T$-link with $n=3$, similar to the case of $n=2$.
\subsection*{Algorithm}
Given a (generalized) $T$-link with $n=3$, we first apply Lemma~\ref{lem:full_twists-2} to convert it into standard form without changing the number of components. If $q_i=0$ for some $i$, then by Lemma~\ref{lem:final_step-2}, the problem reduces to computing the number of components of a twisted torus link. Otherwise, if $q_1\geq p_2$, we apply Lemma~\ref{lem:swapping_p_and_q-2}; or if $p_2>q_1$, we apply Lemma~\ref{lem:forming_p_2_strands} to obtain a $T$-link with fewer braid strands but the same number of components. After each such transformation, we reapply Lemma~\ref{lem:full_twists-2} to restore the standard form. This process is repeated until one of the $q_i$ becomes zero. The algorithm terminates, since the number of strands strictly decreases at each step.

Since $\gcd(p_1,q_1,p_2,q_2,p_3,q_3)$ divides the $\gcd$ at each subsequent step of the algorithm, we obtain the following:
\begin{corollary}
    The number of components of (generalized) $T$-link $T(p_1,q_1,p_2,q_2,p_3,q_3)$ is a positive multiple of $\gcd(p_1,q_1,p_2,q_2,p_3,q_3)$. In particular, if the $T$-link is a knot, then $\gcd(p_1,q_1,p_2,q_2,p_3,q_3)=1$.
\end{corollary}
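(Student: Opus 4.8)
The plan is to mimic the proof of Theorem~\ref{thm:gcd}: I will track the quantity $d := \gcd(p_1,q_1,p_2,q_2,p_3,q_3)$ through every step of the algorithm preceding the corollary, showing that $d$ divides the corresponding gcd after each transformation, and finally that $d$ divides the number of components computed at termination. Since $NC \geq 1$ for any link and $d \geq 1$ (as $p_1 > 0$), divisibility by $d$ immediately upgrades to ``$d$ is a positive divisor of $NC$,'' i.e. $NC$ is a positive multiple of $d$. The knot statement then follows at once: a knot has $NC = 1$, so $d \mid 1$ forces $d = 1$.

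First I would verify that each reduction move preserves (or at least does not shrink) the gcd of the six parameters. For the standardization of Lemma~\ref{lem:standard_form} together with Lemma~\ref{lem:full_twists-2}, the parameters are only permuted, negated, or replaced by residues $[q_i]_{p_i} = q_i - k_i p_i$; since $\gcd$ is invariant under reordering and sign change, and $\gcd(p_i, q_i) = \gcd(p_i, [q_i]_{p_i})$, the full gcd is unchanged. For Lemma~\ref{lem:swapping_p_and_q-2}, the new tuple $(q_1, p_1, p_2, -q_2, p_3, -q_3)$ is again a reordering-with-signs of the old one, so $d$ is preserved. For Lemma~\ref{lem:forming_p_2_strands}, the new tuple is $(q_1, p_2 - p_1, p_2, q_1 + q_2, p_3, q_3)$; here $d \mid q_1$, $d \mid (p_2 - p_1)$, and $d \mid (q_1 + q_2)$, while $d$ divides $p_2, p_3, q_3$ directly, so $d$ divides the new gcd. (In fact each move preserves the gcd exactly, as all the operations are invertible over $\mathbb{Z}$, but divisibility is all that is needed.) These are all elementary gcd manipulations.

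The one step requiring a little care is termination, handled by Lemma~\ref{lem:final_step-2}, because there the reduction is to a two-pair $T$-link, i.e. a twisted torus link, whose component count is controlled by the already-established Theorem~\ref{thm:gcd}. In the cases $q_2 = 0$ and $q_3 = 0$ the answer is exactly $NC$ of a twisted torus link on the surviving two pairs, which by Theorem~\ref{thm:gcd} is a multiple of the gcd of those four parameters; since $d$ divides that gcd, $d \mid NC$. The genuinely non-obvious case is $q_1 = 0$, where $NC = (p_1 - p_2) + NC(p_2,q_2;p_3,q_3)$: here I would note separately that $d \mid p_1$ and $d \mid p_2$ give $d \mid (p_1 - p_2)$, while Theorem~\ref{thm:gcd} gives $d \mid NC(p_2,q_2;p_3,q_3)$, so $d$ divides the sum. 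Thus in every terminal case $d \mid NC$, and combining with the per-step divisibility along the finite, terminating algorithm yields $d \mid NC(p_1,q_1;p_2,q_2;p_3,q_3)$. The main obstacle is precisely this $q_1 = 0$ terminal case, where one must confirm that the additive correction term $p_1 - p_2$ — not just the recursive summand — remains divisible by $d$; everything else is bookkeeping of invariance under reordering, negation, and residue reduction.
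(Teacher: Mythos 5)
Your proposal is correct and follows essentially the same route as the paper, which justifies the corollary by the single observation that $\gcd(p_1,q_1,p_2,q_2,p_3,q_3)$ divides the corresponding gcd at every step of the algorithm and then divides the terminal count via Theorem~\ref{thm:gcd}. Your write-up simply fills in the per-move verifications (including the additive $p_1-p_2$ term in the $q_1=0$ terminal case) that the paper leaves implicit.
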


Our algorithm does not generalize to $T$-links with $n\geq 4$, as the argument used in the proof of Lemma~\ref{lem:standard_form}, which transforms a (generalized) $T$-link into a form where $p_i$ are in descending order, does not extend to higher values of $n$. We conclude by posing the following question.
\begin{question}
    Is there an algorithm or explicit formula for the number of components of a (generalized) $T$-link in terms of its parameters?
\end{question}

\clearpage 
\bibliographystyle{alpha}
\bibliography{references-ttk}

\end{document}